\providecommand{\keywords}[1]
{
	\textbf{\textit{Keywords---}} #1
}
\theoremstyle{definition}
\newtheorem{theorem}{Theorem}[section]
\newtheorem{lemma}{Lemma}[section]
\newtheorem{remark}{Remark}[section]
\author{Fausto Colantoni}
\begin{document}

\title{Non-local skew and non-local sticky Brownian motions}

\maketitle
\begin{abstract}
	In this paper, we present a comprehensive study on the generalization of skew Brownian motion and two-sided sticky Brownian motion by considering non-local operators at the origin for the heat equations on the real line. To begin, we introduce Marchaud-type operators and Caputo-Dzherbashian-type operators, providing an in-depth exposition of their fundamental properties. Subsequently, we describe the two stochastic processes and the associated equations. The non-local skew Brownian motion exhibits jumps, as a subordinator, at zero where the sign of the jump is determined by a skew coin. Conversely, the non-local sticky Brownian motion displays stickiness at zero, behaving as the inverse of a subordinator, resulting in non-Markovian dynamics.
\end{abstract}

\keywords{Skew Brownian motions, sticky Brownian motions, non-local operators}
\section{Introduction}
Skew Brownian motion and sticky Brownian motion are two of the most well-known variations of the standard Brownian motion. The first one allows the skewness that introduces asymmetry, causing the process to have a preference for either upward or downward movements \cite{walsh1978diffusion, shepp, lejay}. The second one tends to stick to certain levels or boundaries. It is characterized by periods of slow movement or delayed diffusion near certain values or regions \cite{bass-sticky,sde-sticky,sticky-numerical}.
Both skew Brownian motions and sticky Brownian motions have found applications in various fields. In finance, skew Brownian motions have been employed to model asset price movements and option pricing \cite{arbitrage}. Sticky Brownian motions, on the other hand, have been used to describe phenomena where particles exhibit sticking behavior, for example in  adsorption models for molecules \cite{graham}.

In this paper, we generalize these two process by introducing non-local operators at a certain level. In particular, the space non-local operators lead to jumps, where there can be a preference for jumping above or below this level, while non-local operators in time slow down the process, making the dynamics no longer Markovian. The idea is to leverage the recent works on non-local boundary value problems \cite{mirko-fbvp1,mirko-fbvp2,coldov2022halfline}, where now instead of the boundary, a certain threshold is considered 
so that the dynamics are anomalous. In this new topic of probability theory, the authors deal with non-local operators (deriving from fractional derivatives such as Caputo-Dzherbashian or Marchaud derivatives) as boundary conditions. A possible reference is \cite{dovidio2022}.

Anomalous diffusions and fractional calculus are often used for real-world applications, such as in finance, physics and hydrodynamics. The literature on the subject is extensive, see, for example, \cite{benson2000application, scalas-mainardi}. The present work involves a Brownian particle displaying anomalous behavior, characterized by jumps or slowdowns at a specific level. Such behavior can be applied to situations where anomalies are associated with a particular threshold.
\section{Basic concepts}
A subordinator is a non-decreasing L\'evy process (\cite[Chapter III]{bertoin1996levy}) and its Laplace exponent is given by a Bernstein function (\cite[Theorem 5.1]{Schilling}).\\
Let us introduce the Bernstein function $\Phi$, defined by the L\'evy-Khintchine representation (\cite[Theorem 3.2]{Schilling})
\begin{align}
\label{LevKinFormula}
\Phi(\lambda)= \int_0^\infty (1-e^{-\lambda z}) \Pi^\Phi(dz), \quad \lambda>0,
\end{align}
where  $\Pi^\Phi$ a L\'evy measure on $(0,\infty)$ such that $\int_0^\infty (1 \wedge z) \Pi^\Phi(dz) < \infty$. We introduce the subordinator $H^\Phi$ which is
characterized by the Laplace exponent $\Phi$, that is
\begin{align}
\label{LapH}
\mathbf{E}_0[\exp(-\lambda H_t^\Phi)]=\exp(-t\Phi(\lambda)), \quad \lambda > 0,
\end{align}
where we denote by $\mathbf{E}_x$ the expected value with respect to $\mathbf{P}_x$ where $x$ is the starting point. Since $\Pi^\Phi(0,\infty)=\infty$, then, from \cite[Theorem 21.3]{ken1999levy}, we have that $H^\Phi$ has strictly increasing sample paths with jumps, indeed the symbol $\Phi$ does not admit any drift.\\
We also define the inverse process $L^\Phi=\{L_t^\Phi,\ t \geq 0\}$, with $L_0^\Phi=0$,  that is the right inverse of $H^\Phi$
\begin{align*}
L_t^\Phi = \inf \{s > 0\,:\, H_s^\Phi >t \}, \quad t>0.
\end{align*}
Because $H^\Phi$ is strictly increasing,  the inverse process $L^\Phi$ turns out to be a continuous process. In particular, in correspondence of the jumps of $H^\Phi$, the process $L^\Phi$ has intervals of consistency (plateaus). Notice that, an inverse process can be regarded as an exit time for $H^\Phi$. By definition, we also have
\begin{align}
\label{relationHL}
\mathbf{P}_0(H_t^\Phi < s) = \mathbf{P}_0(L_s^\Phi>t), \quad s,t>0.
\end{align}
We recall that
\begin{align}
\label{tailSymb}
\frac{\Phi(\lambda)}{\lambda}= \int_0^\infty e^{-\lambda z} \overline{\Pi}^\Phi(z) dz, \quad \overline{\Pi}^\Phi(z)=\Pi^\Phi(z,\infty)
\end{align}
where $\overline{\Pi}^\Phi$ is the so called \textit{tail of the L\'evy measure $\Pi^\Phi$}.

For us, $B$ denotes the standard one dimensional Brownian motion with law $g(t, z) = e^{-z^2/ 4t}/\sqrt{4 \pi t}$, independent of $H^\Phi$. Let us consider the heat equation on the positive half-line with a Neumann boundary condition at $x=0$, that is
\begin{align*}
\begin{cases}
\frac{\partial}{\partial t} u(t,x)= \frac{\partial^2}{\partial x^2} u(t,x) \quad &(t,x) \in (0,\infty) \times (0,\infty)\\
\frac{\partial}{\partial x} u(t,x) \big\vert_{x=0}=0\quad \quad \quad \quad \quad &t>0
\end{cases}.
\end{align*}
The probabilistic representation of the solution can be written in terms of the reflecting Brownian motion $B^+=\{B_t^+, t \geq 0\}$ (see for example \cite{itomckean-halfline} or \cite[Lemma 6.3]{essentials}). We indicate by $\gamma_t=\gamma_t(B^+)$ the local time at zero of the reflecting Brownian motion $B^+$. 
We now define the process $B^\bullet$ as
\begin{align}
\label{Bpallino}
B_t^{\bullet} = H^\Phi L^\Phi \gamma_t - \gamma_t + B_t^+, \quad t>0,
\end{align}
where we use the following notation
\begin{align*}
H^\Phi L^\Phi \gamma_t=H^\Phi \circ L^\Phi \circ \gamma_t
\end{align*}
for the composition of $ H^\Phi, L^\Phi ,\gamma$. The process \eqref{Bpallino} was introduced in \cite{itomckean-halfline} to study the heat equation with the integral boundary condition presented in \cite{feller}. It was established that $B^\bullet$ is a strong Markov process and its local time at zero is $L^\Phi \gamma$ \cite[Section 14]{itomckean-halfline}. The sample paths of $B^\bullet$ were described in \cite[Section 12]{itomckean-halfline} as a reflected Brownian motion, which exhibits random jumps away from the origin in the positive half-line. These jumps correspond to the last jump of $H^\Phi$. It is important to note that $B^\bullet$ is a right-continuous process due to the fact that $H^\Phi L^\Phi$ is the composition of the right-continuous subordinator $H^\Phi$ with its inverse $L^\Phi$, which is a continuous process.
	\begin{figure}[h]
	\centering
	\includegraphics[width=10cm]{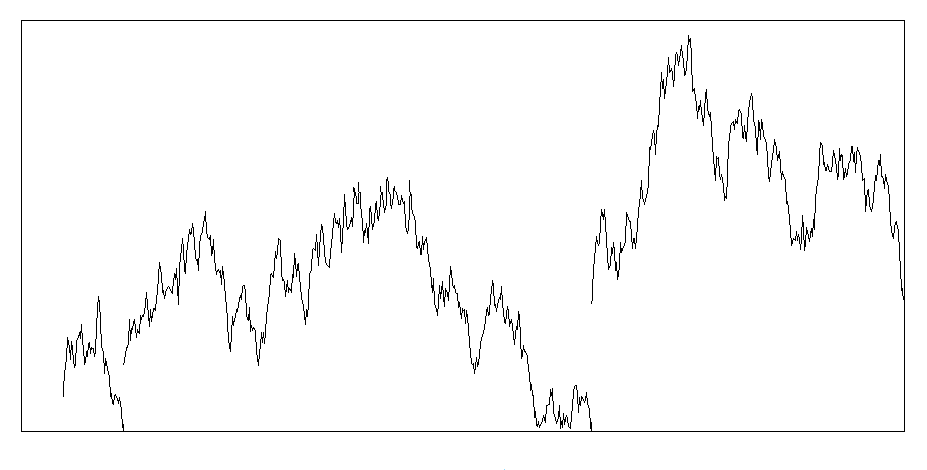} 
	\caption{A possible path for $B^\bullet$.}
	\label{fig:Bpallino}
\end{figure}
\section{Skew Brownian motions and sticky Brownian motions}
The skew Brownian motion, introduced in \cite{itomckean-paths, walsh1978diffusion}, is a straightforward extension of standard Brownian motion on the real line. In this process, the sign of each excursion is determined through independent skew coin-tossing, introducing a level of asymmetry. A possible definition is the following (\cite[Section 4.2, Problem 1]{itomckean-paths}):  $J_1, J_2,...$ denote a fixed enumeration of the excursion intervals of the reflected process $B^+$. For a given parameter $0 < \nu < 1$, let $\{A_m : m = 1, 2, . . . \}$ be an i.i.d. sequence of Bernoulli $\pm1$ valued random variables, independent of $B^+$, also defined on the same probability space with $\mathbf{P}(A_i=1)=\nu$. Define $\nu -$skew
Brownian motion process $B^\nu = \{B^\nu _t: t\geq 0 \}$ by
\begin{align}
\label{skew}
B^\nu _t = \displaystyle \sum_{m=1}^{\infty} \mathbf{1}_{J_m}(t) A_m B^+_t,
\end{align}
for which we have, \cite[Proposition 1 and Proposition 2]{lejay}, that the probabilistic representation of the solution $u \in C^{1,2}((0,\infty) \times \mathbb{R}\setminus \{0\}; \mathbb{R})$ and continuous in $x=0$ of the problem
\begin{align*}
\begin{cases}
\frac{\partial}{\partial t} u(t,x)= \frac{\partial^2}{\partial x^2} u(t,x) \quad &(t,x) \in (0,\infty) \times \mathbb{R}\setminus \{0\} \\
\nu \frac{\partial}{\partial x} u(t,0^+)= (1-\nu) \frac{\partial}{\partial x} u(t,0^-) \quad    &t>0\\
u(t,0^+)=u(t,0^-)\\
u(0,x) =f(x) \quad &x \in \mathbb{R}
\end{cases}
\end{align*}
with $f$ continuous and bounded and $\nu \in [0,1]$, is given by
\begin{align*}
u(t,x)=\mathbf{E}_x [f(B^\nu_t)].
\end{align*}
The process $B^\nu$ is the unique strong solution of the stochastic differential equation (\cite{shepp})
\begin{align*}
X_t=x + B_t+ (2\nu -1) \ell_t^0(X),
\end{align*}
where $\ell_t^0(X)$ is the symmetric local time of $X$ at zero. 
 For a complete discussion on its density, occupation and local times see \cite{occupation}.
 
Ont the other hand, the one side sticky Brownian motion is characterized by the Feller-Wentzell boundary conditions (\cite[Section 10]{itomckean-halfline}):
\begin{align}
\label{bc: Feller-Wentzell}
p_3 \frac{\partial^2}{\partial x^2} u(t,0)= p_2 \frac{\partial}{\partial x} u(t,0)
\end{align}
and the solution of the heat equation with these boundary conditions is written as the time change $B^+_{A_t^{-1}}=B^+ \circ A_t^{-1}$, where $A_t^{-1}$ is the right inverse of
\begin{align*}
A_t= t+ \frac{p_3}{p_2}\gamma_t.
\end{align*}
Because of this time change, the process is delayed in zero and then it reflects inside the positive half-line. If we extend the heat equation up to the boundary, the condition \eqref{bc: Feller-Wentzell} can be seen as a dynamic boundary condition, but we will delve into this concept later.\\
In the present paper we focus on a generalization of the two-sided sticky Brownian motion, where, by introducing the skewness, the particle in zero is delayed and it reflects in one of the two possible directions with a certain probability (see \cite[Section 17]{itomckean-halfline}). For a comprehensive treatment of these two sticky processes, please also refer to \cite{howitt2007stochastic}.


The motivation to explore non-local conditions at zero stems from several observations. For instance, when extending the heat equation up to zero in \eqref{bc: Feller-Wentzell} and considering dynamic conditions (by replacing $\frac{\partial^2}{\partial x^2}$ with $\frac{\partial}{\partial t}$), the question arises: What if we substitute the temporal derivative with a different type of operator that generalizes time derivatives? A similar inquiry can be made regarding spatial derivatives in conditions at zero for skew Brownian motion. Specifically, we aim to understand whether the process exhibits anomalous behavior near zero. To address this, we introduce non-local operators and employ them as conditions within the heat equation.

\section{Non-local operators}
In the current section we introduce the non-local operators which we will use as jumping or delaying conditions at zero. We use the common notation for Sobolev spaces 
\begin{align*}
W^{1,p}(\mathbb{R}) = \left\{ u \in L^p(\mathbb{R}):\;  u^\prime \in L^p(\mathbb{R}) \right\} \quad p \in [1,\infty].
\end{align*}
Then, for $u \in W^{1,\infty}(\mathbb{R})$ we define the left Marchaud-type derivative
\begin{align}
\label{Marchaudright}
\mathbf{D}_{x-}^\Phi u(x)=\int_0^\infty (u(x) - u(x-y))\Pi^\Phi(dy), \quad -\infty < x < +\infty
\end{align}
and the right  Marchaud-type derivative
\begin{align}
\label{Marchaudleft}
\mathbf{D}_{x+}^\Phi u(x) = \int_0^\infty (u(x) - u(x+y))\Pi^\Phi(dy),  \quad -\infty < x < +\infty.
\end{align}
Indeed, we see that
\begin{align}
\label{stimaMarchaud}
\vert \mathbf{D}_{x+}^\Phi u(x) \vert &\leq \int_0^1 \vert u(x) -u(x+y) \vert \Pi^\Phi(dy) + \int_1^\infty \vert u(x) -u(x+y) \vert \Pi^\Phi(dy) \notag \\
&\leq K \int_0^1 y \, \Pi^\Phi(dy) + 2 \vert\vert u \vert \vert_\infty \int_1^\infty \Pi^\Phi(dy) \notag \\
&\leq(K+2 \vert \vert u \vert \vert_\infty) \int_0^\infty (1 \wedge y) \Pi^\Phi(dy) <\infty,
\end{align}
where for the first integral we use that the function $u$ is locally Lipschitz, since its first derivative is in $L^\infty(\mathbb{R})$, while for the second integral we have utilized the fact that $u \in L^\infty(\mathbb{R})$. The same is true for the other derivative \eqref{Marchaudright}.\\
We call the operators  \eqref{Marchaudright} and \eqref{Marchaudleft} \textit{Marchaud-type derivatives} because they are a generalization of the Marchaud derivatives on the real line (see \cite[Formula (5.57) and (5.58)]{samko1993fractional}), that are the case of $\Phi(\lambda)=\lambda^\alpha$, $\alpha \in (0,1)$, and $\Pi^\Phi(dy)=\frac{\alpha}{\Gamma(1-\alpha)} \frac{dy}{y^{\alpha+1}}$.

As for the Marchaud-type operators, we consider a generalization, through different L\'evy measures,  of the Caputo-Dzherbashian derivative (see \cite[Section 2.4]{kilbas}).
Let $M>0$, $w\geq 0$ and $\mathcal{M}_w$ be the set of (piecewise) continuous function on $[0, \infty)$ of exponential order $w$ such that $\vert \varphi(t)\vert \leq M e^{wt}$. Let $\varphi \in \mathcal{M}_w \cap C([0,\infty))$ with $\varphi^\prime \in \mathcal{M}_w$. Then we define the Caputo-Dzherbashian-type derivative as
\begin{align}
\label{Caputo}
\mathfrak{D}_{t}^\Phi \varphi(t):=\int_0^t \varphi^\prime (s) \overline{\Pi}^\Phi(t-s) ds
\end{align}
which is a convolution type operator. Indeed, by using \eqref{tailSymb}, the Laplace transform is
\begin{align}
\label{LapCaputo}
\int_0^\infty e^{-\lambda t}  \mathfrak{D}_{t}^\Phi \varphi(t) dt= \Phi(\lambda) \widetilde{\varphi}(\lambda)- \frac{\Phi(\lambda)}{\lambda} \varphi(0), \quad \lambda > w
\end{align}
where $\widetilde{\varphi}$ is the Laplace transform of $\varphi$. For a more recent overview on this operator, we recommend reading \cite{kochubei2011general,toaldo2015convolution,chen2017time}.
\section{Non-local skew Brownian motions}
In \cite{coldov2022halfline}, the authors provide that the process $B^\bullet$ is the probabilistic solution of the heat equation on $(0,\infty)$ with a Marchaud-type derivative at zero, as \eqref{Marchaudleft} when it is restricted to the positive half-line. So, the jumps of this process are due to those of the subordinator, which by its nature is connected to the Marchaud operator.

We want now to generalize the process \eqref{skew} for the Marchaud-type derivatives by inducing jumps. As for the $\nu-$ skew Brownian motion, let us define the $\nu-$ skew $B^\bullet$. Let $J_1^\bullet, J_2^\bullet,...$ be a fixed enumeration of the excursion intervals of the process $B^\bullet$. As before, $\{A_m : m = 1, 2, . . . \}$ is an i.i.d. sequence of Bernoulli $\pm1$ valued random variables, independent of $B^\bullet$, also defined on the same probability space with $\mathbf{P}(A_i=1)=\nu$. Define $\nu -$skew
Brownian motion process $B^{\nu \bullet} = \{B^{\nu \bullet}_t: t\geq 0 \}$ by
\begin{align}
\label{skew-Bpallino}
B^{\nu \bullet} _t = \displaystyle \sum_{m=1}^{\infty} \mathbf{1}_{J_m^\bullet}(t) A_m B^\bullet_t.
\end{align}
The process $B^{\nu \bullet}$  in zero with probability $\nu$ jumps as $B^\bullet$, otherwise like $-B^\bullet$. Since $B^\bullet$ is a right-continuous strong Markov process, $B^{\nu \bullet}$ is also a right-continuous strong Markov process.
\begin{remark}
	The excursions $J_i$ and $J_i^\bullet$, $i \geq 1$, are not the same. Therefore, even though $B^\bullet$ behaves like $B^+$ outside of zero, since $B^\bullet$ restarts after a jump of the subordinator $H^\Phi$, it takes more time to reach zero compared to $B^+$, which reflects immediately.
\end{remark}
Before proving the next theorem, let us introduce the space we need for the non local conditions at zero
\begin{align*}
D_1:=\{\varphi: \forall t>0 ,\, \varphi(t, \cdot) \in W^{1,\infty} (\mathbb{R}) \text{ s.t. } \mathbf{D}_{x+}^\Phi \varphi(t,x) \big\vert_{x=0^+}  \text{ and } \mathbf{D}_{x-}^\Phi \varphi(t,x) \big\vert_{x=0^-} \text{ exist} \}
\end{align*}
\begin{theorem}
	\label{thm:non-local-skew}
	The probabilistic representation of the solution $u \in C^{1,2}((0,\infty) \times \mathbb{R} \setminus \{0\}) \cap D_1$ and continuous in $x=0$, for $\nu \in (0,1)$ and $f$ continuous and bounded,
	\begin{align*}
	\begin{cases}
	\frac{\partial}{\partial t} u(t,x)= \frac{\partial^2}{\partial x^2} u(t,x) \quad &(t,x) \in (0,\infty) \times \mathbb{R}\setminus \{0\} \\
	\nu \, \mathbf{D}_{x+}^\Phi u(t,x) \big\vert_{x=0^+} + (1-\nu) \, \mathbf{D}_{x-}^\Phi u(t,x) \big\vert_{x=0^-}=0\quad    &t>0\\
	u(0,x) =f(x) \quad &x \in \mathbb{R}
	\end{cases}
	\end{align*}
	is the following
	\begin{align*}
	u(t,x)= \mathbf{E}_x\left[f(B^{\nu \bullet} _t)\right]
	\end{align*}
\end{theorem}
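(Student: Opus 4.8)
The plan is to establish the result in two stages: first verify that $u(t,x) = \mathbf{E}_x[f(B^{\nu\bullet}_t)]$ satisfies the heat equation on $(0,\infty)\times\mathbb{R}\setminus\{0\}$ together with the initial condition, and then verify the non-local interface condition at $x=0$. The first stage is routine: away from zero the process $B^{\nu\bullet}$ behaves like $\pm B^\bullet$, which in turn behaves like a (reflected) Brownian motion between visits to the origin; hence on each side of the origin $u$ solves $\partial_t u = \partial_x^2 u$ by the standard Itô/heat-semigroup argument, and continuity in $x=0$ follows from the pasting construction \eqref{skew-Bpallino} exactly as for the classical skew Brownian motion in \cite{lejay}. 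The initial condition $u(0,x)=f(x)$ is immediate from $B^{\nu\bullet}_0 = x$ and boundedness and continuity of $f$.

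The heart of the argument is the non-local boundary condition. I would proceed by decomposing the excursions of $B^{\nu\bullet}$ according to the skew coin: with probability $\nu$ an excursion is a (positive, jump-augmented) excursion of $B^\bullet$, and with probability $1-\nu$ it is the reflection $-B^\bullet$ of such an excursion. By the result recalled at the start of Section~6 (from \cite{coldov2022halfline}), the process $B^\bullet$ restricted to $(0,\infty)$ is the probabilistic solution of the heat equation with the Marchaud-type condition $\mathbf{D}_{x+}^\Phi u(t,x)|_{x=0^+}=0$; equivalently, $v_+(t,x):=\mathbf{E}_x[f(B^\bullet_t)]$ for $x>0$ satisfies this one-sided condition (with the appropriate one-sided data). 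The reflected copy $-B^\bullet$ then produces, for $x<0$, a function $v_-$ satisfying the mirror condition $\mathbf{D}_{x-}^\Phi v_-(t,x)|_{x=0^-}=0$. The skew coin weights these two contributions: I would argue, via an excursion-theoretic / strong-Markov decomposition at the (local-time) clock $L^\Phi\gamma$ governing returns to zero, that $u$ restricted to $x>0$ has boundary flux $\nu^{-1}$ times a common quantity and $u$ restricted to $x<0$ has boundary flux $(1-\nu)^{-1}$ times the same quantity — the standard skew-type balance — so that $\nu\,\mathbf{D}_{x+}^\Phi u(t,0^+) + (1-\nu)\,\mathbf{D}_{x-}^\Phi u(t,0^-)=0$.

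Concretely, the key computation I would isolate is a first-passage / resolvent identity: applying the Laplace transform in $t$ and using the strong Markov property at the first return to $0$, write the $\lambda$-resolvent $u_\lambda(x)=\int_0^\infty e^{-\lambda t}u(t,x)\,dt$ as a combination of the resolvent of Brownian motion killed at $0$ plus a term $\phi(x)\,u_\lambda(0)$, where $\phi$ solves $\lambda\phi = \phi''$ on each side with $\phi(0)=1$. The Marchaud-type operators $\mathbf{D}_{x\pm}^\Phi$ applied to such $\phi$ can be evaluated (using the exponential form of $\phi$ and the Lévy-Khintchine representation \eqref{LevKinFormula}), and the weighting by $\nu$ vs.\ $1-\nu$ in the definition \eqref{skew-Bpallino} of the excursion signs is exactly what makes the weighted sum of the two one-sided fluxes cancel against the common killed-resolvent contribution. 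I would also need to check membership $u\in C^{1,2}\cap D_1$: the $C^{1,2}$ regularity off the origin is classical parabolic regularity for $f$ continuous bounded, while $u(t,\cdot)\in W^{1,\infty}(\mathbb{R})$ and existence of the one-sided Marchaud derivatives follow from the estimate \eqref{stimaMarchaud} once one knows $u(t,\cdot)$ and $\partial_x u(t,\cdot)$ are bounded uniformly on each half-line.

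The main obstacle I anticipate is making the excursion decomposition rigorous at the level of the non-local operator: unlike the classical skew case where the interface condition is a pointwise identity of first derivatives, here the condition involves integrals against $\Pi^\Phi$ over all of $(0,\infty)$, so I must be careful that the "one-sided" Marchaud derivative $\mathbf{D}_{x+}^\Phi u(t,0^+)$ genuinely sees only the behavior of $u$ on the positive axis — which it does by construction of $\mathbf{D}_{x+}^\Phi$, integrating $u(x+y)$ with $x\to 0^+$, $y>0$ — and that the plateaus of $L^\Phi$ (the stickiness induced by jumps of $H^\Phi$) do not disturb the excursion straddling argument. I would handle this by working with the time-changed clock and invoking that $B^\bullet$ is a right-continuous strong Markov process with local time $L^\Phi\gamma$ at zero, as established in \cite{itomckean-halfline}, so that the skew coin is tossed precisely at the successive excursions away from $0$ in the $B^\bullet$-sense.
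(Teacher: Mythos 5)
Your proposal follows essentially the same route as the paper: pass to the $\lambda$-resolvent, use the strong Markov property at $\tau_0$ to write $R_\lambda f(x)=R_\lambda^D f(x)+e^{-\sqrt{\lambda}\vert x\vert}R_\lambda f(0)$, identify $R_\lambda f(0)$ through the skew-coin excursion decomposition together with the one-sided resolvent of $B^\bullet$ (your appeal to the result of \cite{coldov2022halfline} is exactly the content of the Itô--McKean identity \eqref{resolvent:Bpallino} that the paper proves in its appendix), evaluate $\mathbf{D}^\Phi_{x\pm}$ on the exponential and killed parts via the L\'evy--Khintchine representation, and invert the Laplace transform. The only slip is your heuristic claim that the one-sided fluxes are proportional to $\nu^{-1}$ and $(1-\nu)^{-1}$: for $f$ supported on the positive axis they are $(\nu-1)\int_0^\infty R_\lambda^D f\,d\Pi^\Phi$ and $\nu\int_0^\infty R_\lambda^D f\,d\Pi^\Phi$, respectively, but the concrete resolvent computation you outline yields these constants and the required cancellation exactly as in the paper's proof.
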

\begin{proof}
	Let us focus on the resolvent of $u$, for $\lambda >0$,
	\begin{align*}
	R_\lambda f(x)&= \int_0^\infty e^{-\lambda t} u(t,x) dt\\
	&= \int_{0}^{\infty} e^{-\lambda t} \mathbf{E}_x\left[f(B^{\nu \bullet} _t)\right] \, dt.
	\end{align*}
	We define the hitting time
	\begin{align*}
	\tau_0:=\inf \{t>0: B_t =0\}
	\end{align*}
	and since, before hitting zero, $B^{\nu \bullet}$ behaves like $B$, $\tau_0$ is also the hitting time at zero of $B^{\nu \bullet}$. Now, we have
	\begin{align*}
	R_\lambda f(x) &= \int_{0}^{\infty} e^{-\lambda t} \mathbf{E}_x\left[f(B^{\nu \bullet} _t)\right] \, dt\\
	&= \int_{0}^{\tau_0} e^{-\lambda t} \mathbf{E}_x\left[f(B^{\nu \bullet} _t)\right] \, dt + \int_{\tau_0}^{\infty} e^{-\lambda t} \mathbf{E}_x\left[f(B^{\nu \bullet} _t)\right] \, dt\\
	&=  \int_{0}^{\infty} e^{-\lambda t} \mathbf{E}_x\left[f(B^{D} _t)\right] \, dt + \mathbf{E}_x[e^{-\lambda \tau_0}] \int_{0}^{\infty} e^{-\lambda t} \mathbf{E}_0\left[f(B^{\nu \bullet} _t)\right] \, dt,
	\end{align*}
	where we have used that the process before $\tau_0$ is a Dirichlet Brownian motion $B^D$, killed at zero, and the strong Markov property for $B^{\nu \bullet}$. We recall that, for $\lambda>0$, (see \cite[Lemma 2.11]{essentials})
	\begin{align*}
	\mathbf{E}_x[e^{-\lambda \tau_0}]= e^{-\sqrt{\lambda} \vert x \vert}
	\end{align*}
and (see \cite[Example 7.14]{schilling-brownian})
\begin{align}
\label{lapg1}
\int_0^\infty e^{-\lambda t} g(t,x) dt &= \frac{1}{2} \frac{e^{-\vert x \vert \sqrt{\lambda}}}{\sqrt{\lambda}}\\
\label{lapg2}
\int_0^\infty e^{-\lambda t} \frac{x}{t} g(t,x) dt &=
e^{-\vert x \vert \sqrt{\lambda}}.
\end{align}
Then, by inverting the Laplace transform, we obtain that the solution is written as
\begin{align*}
u(t,x)= \int_{x\cdot y>0} \left(g(t,x-y) - g(t,x+y)\right)f(y) dy + \int_0^t \frac{x}{s} g(s,x) u(t-s,0) ds,
\end{align*}
where we can use the Dirichlet kernel only if $x$ and $y$ have the same sign ($x \cdot y>0$), otherwise, from the fact that the paths, outside from zero, are continuous, we can not reach $y$ without passing for $0$. The choice of the initial datum $f$ continuous and bounded allows us to obtain a bounded semigroup, as in the case of \cite[Proposition 1]{lejay}. The resolvent turns out to be
\begin{align*}
R_\lambda f(x) &= \int_{0}^{\infty} e^{-\lambda t} \int_{x\cdot y>0} \left(g(t,x-y) - g(t,x+y)\right)f(y) dy\, dt + e^{-\sqrt{\lambda} \vert x \vert} \int_{0}^{\infty} e^{-\lambda t} \mathbf{E}_0\left[f(B^{\nu \bullet} _t)\right] \, dt\\
&= \int_{0}^{\infty} e^{-\lambda t} \int_{x\cdot y>0} \left(g(t,x-y) - g(t,x+y)\right)f(y) dy\, dt + e^{-\sqrt{\lambda} \vert x \vert} R_\lambda f(0).
\end{align*}
Since the first part is the Dirichlet kernel for the heat equation, for $x \ne 0, \lambda >0$ we have
\begin{align*}
\frac{\partial^2}{dx^2} R_\lambda f(x)= \lambda R_\lambda f(x)-f(x),
\end{align*}
then we get 
\begin{align*}
\frac{\partial}{dt} u(t,x)= \frac{\partial^2}{\partial x^2} u(t,x) \quad (t,x) \in (0,\infty) \times \mathbb{R}\setminus \{0\},
\end{align*}
as requested. Now, we move on the conditions at zero. First, by using the definition of $B^{\nu \bullet}$ we observe that, for $\lambda >0$,
\begin{align}
\label{resolvent-skew-zero}
R_\lambda f(0)&= \int_{0}^{\infty} e^{-\lambda t} \mathbf{E}_0\left[f(B^{\nu \bullet} _t)\right] \, dt \notag\\
&=\mathbf{E}_0\left[\int_0^\infty e^{-\lambda t} \sum_{n=1}^{\infty} \mathbf{1}_{J_n^\bullet}(t) f(A_n B_t^\bullet) dt\right] \notag\\
&= \sum_{n=1}^\infty \mathbf{E}_0 \left[\int_{J_n^\bullet} e^{-\lambda t}f(A_n B^\bullet_t) dt\right] \notag\\
&=\nu\sum_{n=1}^\infty \mathbf{E}_0 \left[\int_{J_n^\bullet} e^{-\lambda t}f(B_t^\bullet) dt\right] +(1-\nu) \sum_{n=1}^\infty \mathbf{E}_0 \left[\int_{J_n^\bullet} e^{-\lambda t}f(- B_t^\bullet) dt\right] \notag\\
&=\nu \mathbf{E}_0 \left[\int_{0}^\infty e^{-\lambda t}f(B_t^\bullet) dt\right] +(1-\nu) \mathbf{E}_0 \left[\int_{0}^\infty e^{-\lambda t}f(- B_t^\bullet) dt\right] \notag \\
&=\begin{cases}
\nu \frac{\int_0^\infty R_\lambda^D f(y)   \, \Pi^\Phi (dy)}{\Phi(\sqrt{\lambda})} \quad y \geq 0\\
\\(1-\nu) \frac{\int_0^\infty R_\lambda^{-D} f(-y)  \, \Pi^\Phi (dy)}{\Phi(\sqrt{\lambda})} \quad y <0,
\end{cases}
\end{align}
where, in the last equality, we have used that $B^\bullet$ has positive trajectories and
\begin{align}
\label{resolvent:Bpallino}
\mathbf{E}_0 \left[\int_{0}^\infty e^{-\lambda t}f(B_t^\bullet) dt\right]= \frac{\int_0^\infty R_\lambda^D f(y) \, \Pi^\Phi (dy)}{\Phi(\sqrt{\lambda})}
\end{align}
which is provided in \cite[Formula 6, Section 15]{itomckean-halfline}, where $R_\lambda^D f(y)$ is the resolvent of the killed Brownian motion $B^D$ and $R_\lambda^{-D} f(-y)$ the resolvent of $-B^D$. To facilitate the reader, we include the proof of \eqref{resolvent:Bpallino} in the Appendix.

Now, we provide the boundary conditions. For $y>0$, from the linearity of the Marchaud-type operators, we have
\begin{align*}
	&\nu \, \mathbf{D}_{x+}^\Phi R_\lambda f(x) \big\vert_{x=0^+}= \nu \left(\mathbf{D}_{x+}^\Phi R_\lambda^D f(x) + \nu \frac{\int_0^\infty R_\lambda^D f(y) \, \Pi^\Phi (dy)}{\Phi(\sqrt{\lambda})} \mathbf{D}_{x+}^\Phi e^{-\sqrt{\lambda} x}\right)_{x=0^+}\\
	&=\nu \left(\lim_{x\downarrow 0}  \int_0^\infty \left( R^D_\lambda f(x) - R^D_\lambda f(x+y) \right) \Pi^\Phi(dy)+ \nu \frac{\int_0^\infty R_\lambda^D f(y) \, \Pi^\Phi (dy)}{\Phi(\sqrt{\lambda})} \lim_{x\downarrow 0}  \int_0^\infty \left( e^{-x \sqrt{\lambda}} - e^{-(x+y) \sqrt{\lambda}} \right) \Pi^\Phi(dy)\, \right)\\
	&=\nu \left(- \int_0^\infty R^D_\lambda f(y)\, \Pi^\Phi(dy) +\nu \frac{\int_0^\infty R_\lambda^D f(y) \, \Pi^\Phi (dy)}{\Phi(\sqrt{\lambda})} \int_{0}^\infty (1-e^{-\sqrt{\lambda} y }) \Pi^\Phi(dy) \right)\\
	&=\nu \left(- \int_0^\infty R^D_\lambda f(y)\, \Pi^\Phi(dy) +\nu \frac{\int_0^\infty R_\lambda^D f(y) \, \Pi^\Phi (dy)}{\Phi(\sqrt{\lambda})} \Phi(\sqrt{\lambda})\right)\\
	&=\nu \left(- \int_0^\infty R^D_\lambda f(y)\, \Pi^\Phi(dy) +\nu \int_0^\infty R_\lambda^D f(y) \, \Pi^\Phi (dy)\right)\\
	&=\nu (\nu-1) \int_0^\infty R_\lambda^D f(y) \, \Pi^\Phi (dy).
\end{align*}
On the other hand (so when $x$ is negative), for $y>0$ we have
\begin{align*}
(1-\nu) \, \mathbf{D}_{x-}^\Phi R_\lambda f(x) \big\vert_{x=0^-}&=(1-\nu) \nu \frac{\int_0^\infty R_\lambda^D f(y) \, \Pi^\Phi (dy)}{\Phi(\sqrt{\lambda})} \mathbf{D}_{x-}^\Phi e^{\sqrt{\lambda} x}\big\vert_{x=0^-} \\
&=(1-\nu) \nu \frac{\int_0^\infty R_\lambda^D f(y) \, \Pi^\Phi (dy)}{\Phi(\sqrt{\lambda})} \lim_{x\uparrow 0}  \int_0^\infty \left( e^{x \sqrt{\lambda}} - e^{(x-y) \sqrt{\lambda}} \right) \Pi^\Phi(dy)\\
&=(1-\nu) \nu \frac{\int_0^\infty R_\lambda^D f(y) \, \Pi^\Phi (dy)}{\Phi(\sqrt{\lambda})} \int_{0}^\infty (1-e^{-\sqrt{\lambda} y }) \Pi^\Phi(dy)\\
&= (1-\nu) \nu \frac{\int_0^\infty R_\lambda^D f(y) \, \Pi^\Phi (dy)}{\Phi(\sqrt{\lambda})} \Phi(\sqrt{\lambda})\\
&=(1-\nu) \nu {\int_0^\infty R_\lambda^D f(y) \, \Pi^\Phi (dy)},
\end{align*}
where we have exploited that $x$ and $y$ have opposite signs. Then we obtain, for $y>0$ and $\lambda>0$
\begin{align}
\label{bc:skew-pos}
\nu \, \mathbf{D}_{x+}^\Phi R_\lambda f(x) \big\vert_{x=0^+} + (1-\nu) \, \mathbf{D}_{x-}^\Phi  R_\lambda f(x)\big\vert_{x=0^-}=0.
\end{align}
We use the same argument for negative trajectories. Indeed, for $y<0$ and $x<0$, we have
\begin{align*}
&(1-\nu) \mathbf{D}_{x-}^\Phi R_\lambda f(x) \big\vert_{x=0^-}=(1-\nu) \left(\mathbf{D}_{x-}^\Phi R_\lambda^{-D} f(-x) + (1-\nu) \frac{\int_0^\infty R_\lambda^{-D} f(-y) \, \Pi^\Phi (dy)}{\Phi(\sqrt{\lambda})} \mathbf{D}_{x-}^\Phi e^{\sqrt{\lambda} x}\right)_{x=0^-}\\
&=(1-\nu) \left(\lim_{x\uparrow 0}  \int_0^\infty \left( R^{-D}_\lambda f(-x) - R^{-D}_\lambda f(-x-y) \right) \Pi^\Phi(dy)\right)\\
&+ (1-\nu)\left((1-\nu)\frac{\int_0^\infty R_\lambda^{-D} f(-y) \, \Pi^\Phi (dy)}{\Phi(\sqrt{\lambda})} \lim_{x\uparrow 0}  \int_0^\infty \left( e^{x \sqrt{\lambda}} - e^{(x-y) \sqrt{\lambda}} \right) \Pi^\Phi(dy)\, \right)\\
&=(1-\nu) \left(- \int_0^\infty R^{-D}_\lambda f(-y)\, \Pi^\Phi(dy) +(1-\nu) \frac{\int_0^\infty R_\lambda^{-D} f(-y) \, \Pi^\Phi (dy)}{\Phi(\sqrt{\lambda})} \int_{0}^\infty (1-e^{-\sqrt{\lambda} y }) \Pi^\Phi(dy) \right)\\
&=(1-\nu) \left(- \int_0^\infty R^{-D}_\lambda f(-y)\, \Pi^\Phi(dy) +(1-\nu) \frac{\int_0^\infty R_\lambda^{-D} f(-y) \, \Pi^\Phi (dy)}{\Phi(\sqrt{\lambda})} \Phi(\sqrt{\lambda})\right)\\
&=(1-\nu) \left(- \int_0^\infty R^{-D}_\lambda f(-y)\, \Pi^\Phi(dy) +(1-\nu) \int_0^\infty R_\lambda^{-D} f(-y) \, \Pi^\Phi (dy)\right)\\
&=-(1-\nu) \nu \int_0^\infty R_\lambda^{-D} f(-y) \, \Pi^\Phi (dy).
\end{align*}
Similarly, for $x>0$ and $y<0$, we get
\begin{align*}
\nu \, \mathbf{D}_{x+}^\Phi R_\lambda f(x) \big\vert_{x=0^+}&=\nu (1-\nu) \frac{\int_0^\infty R_\lambda^{-D} f(-y) \, \Pi^\Phi (dy)}{\Phi(\sqrt{\lambda})} \mathbf{D}_{x+}^\Phi e^{-\sqrt{\lambda} x}\big\vert_{x=0^+} \\
&=\nu (1-\nu) \frac{\int_0^\infty R_\lambda^{-D} f(-y) \, \Pi^\Phi (dy)}{\Phi(\sqrt{\lambda})} \lim_{x\downarrow 0}  \int_0^\infty \left( e^{-x \sqrt{\lambda}} - e^{-(x+y) \sqrt{\lambda}} \right) \Pi^\Phi(dy)\\
&=\nu (1-\nu) \frac{\int_0^\infty R_\lambda^{-D} f(-y) \, \Pi^\Phi (dy)}{\Phi(\sqrt{\lambda})} \int_{0}^\infty (1-e^{-\sqrt{\lambda} y }) \Pi^\Phi(dy)\\
&= \nu (1-\nu) \frac{\int_0^\infty R_\lambda^{-D} f(-y) \, \Pi^\Phi (dy)}{\Phi(\sqrt{\lambda})} \Phi(\sqrt{\lambda})\\
&=\nu (1-\nu) {\int_0^\infty R_\lambda^{-D} f(-y) \, \Pi^\Phi (dy)}.
\end{align*}
Then, again, we obtain, for $y<0$ and $\lambda>0$
\begin{align}
\label{bc:skew-neg}
\nu \, \mathbf{D}_{x+}^\Phi R_\lambda f(x) \big\vert_{x=0^+} + (1-\nu) \, \mathbf{D}_{x-}^\Phi  R_\lambda f(x)\big\vert_{x=0^-}=0.
\end{align}
By inverting the Laplace transform in \eqref{bc:skew-pos} and \eqref{bc:skew-neg}, since we are dealing with a continuous function, we obtain that
\begin{align*}
\nu \, \mathbf{D}_{x+}^\Phi u(t,x) \big\vert_{x=0^+} + (1-\nu) \, \mathbf{D}_{x-}^\Phi u(t,x) \big\vert_{x=0^-}=0\quad    &t>0.
\end{align*}
\end{proof}
	\begin{figure}[h!]
	\centering
	\includegraphics[width=8cm]{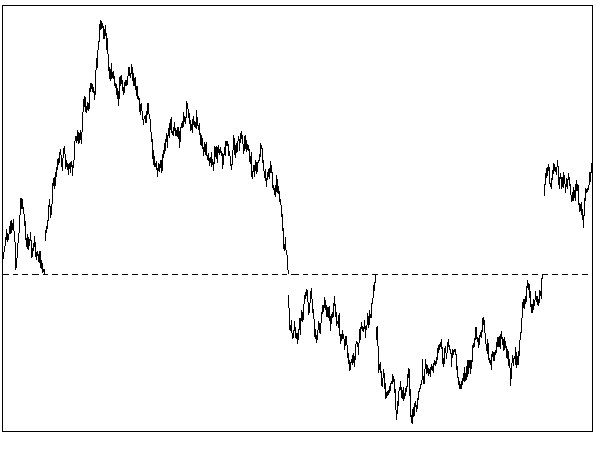} 
	\caption{A possible path for $B^{\nu \bullet}$.}
	\label{fig:non-local-skew}
\end{figure}
As illustrated in Figure \ref{fig:non-local-skew}, the behavior of the process $B^{\nu \bullet}$ closely resembles that of $B^\bullet$, but with a notable distinction at zero. Here, with a probability of $\nu$, it jumps as $B^\bullet$, otherwise, it emulates the negative jump of $-B^\bullet$. Moreover, at the origin, it jumps as the last jump of the subordinator  $H^\Phi$, with both the sign and the continuation of the stochastic process determined through the outcome of a coin toss with a skew distribution.

We want to emphasize that the roles of the left and right derivative, in the case of the skew Brownian motion, are now taken by the two Marchaud-type derivatives \eqref{Marchaudright} and \eqref{Marchaudleft}, which therefore appear as a generalization of the concept of derivative. Let us delve deeper into this idea with the following remark.
\begin{remark}
	We want to investigate in what sense the operators \eqref{Marchaudright} and \eqref{Marchaudleft} and the process $B^{\nu \bullet}$ are a generalization of derivatives and skew Brownian motion. Let $f$ be in the Schwartz space $\mathcal{S}(\mathbb{R})$ and $\Phi(\lambda)=\lambda^\alpha$, with $\alpha \in (0,1)$, be the Bernstein function associated to the $\alpha-$stable subordinator. Then, the operators $\mathbf{D}_{x-}^\alpha$ and  $\mathbf{D}_{x+}^\alpha$ are the Marchaud derivatives on the real line and, for $\xi \in \mathbb{R}$, we have
	\begin{align*}
	\int_{-\infty}^{\infty} e^{-i \xi x} \mathbf{D}_{x-}^\alpha f(x)dx&= 	\int_{-\infty}^{\infty} e^{-i \xi x} \int_0^\infty (f(x) - f(x-y))\Pi^\alpha(dy) dx\\
	&=\widehat{f}(\xi) \int_{0}^{\infty} (1-e^{-i\xi y}) \Pi^\alpha(dy)\\
	&=(i\xi)^\alpha \widehat{f}(\xi),
	\end{align*}
	where $\widehat{f}(\xi)$ is the Fourier transform of $f$. Similarly, we get
		\begin{align*}
	\int_{-\infty}^{\infty} e^{-i \xi x} \mathbf{D}_{x+}^\alpha f(x)dx&= 	\int_{-\infty}^{\infty} e^{-i \xi x} \int_0^\infty (f(x) - f(x+y))\Pi^\alpha(dy) dx\\
	&=\widehat{f}(\xi) \int_{0}^{\infty} (1-e^{i\xi y}) \Pi^\alpha(dy)\\
	&=(-i\xi)^\alpha \widehat{f}(\xi).
	\end{align*}
	When $\alpha \uparrow 1$, we have
	\begin{align*}
	\int_{-\infty}^{\infty} e^{-i \xi x} \mathbf{D}_{x-}^\alpha f(x)dx=(i\xi)^\alpha \widehat{f}(\xi) &\rightarrow (i \xi) \widehat{f}(\xi)= 	\int_{-\infty}^{\infty} e^{-i \xi x} \frac{\partial}{\partial x} f(x) dx\\
	\int_{-\infty}^{\infty} e^{-i \xi x} \mathbf{D}_{x+}^\alpha f(x)dx=(-i\xi)^\alpha \widehat{f}(\xi) &\rightarrow (-i \xi) \widehat{f}(\xi)= 	\int_{-\infty}^{\infty} e^{-i \xi x} \left(-\frac{\partial }{\partial x} f(x)\right) dx.
	\end{align*}
	In this sense, we can say that we have convergence to first derivatives and for $\alpha \uparrow 1$ the conditions become
	\begin{align*}
		\nu \, \mathbf{D}_{x+}^\alpha f(x) \big\vert_{x=0^+} + (1-\nu) \, \mathbf{D}_{x-}^\alpha f(x) \big\vert_{x=0^-}=0 \rightarrow - \nu \frac{\partial}{\partial x} f(x)\big\vert_{x=0^+} + (1-\nu) \frac{\partial}{\partial x} f(x) \big\vert_{x=0^-}=0,
	\end{align*}
	which coincide with those of the skew Brownian motion. We also notice that the fact that the process $B^{\nu \bullet}$, in the limit $\alpha \uparrow 1$, behaves like a skew Brownian motion is evident from its trajectories. In fact, for the limit $\alpha \uparrow 1$, we observe that the $\alpha-$ stable subordinator tends to $t$, then the process $B^\bullet$ does not exhibit jumps.
\end{remark}
\begin{remark}
It is possible to generalize the Fourier transforms seen in the previous remark, provided we take into account the measures $\Pi^\Phi$, and we have, for $f \in \mathcal{S}(\mathbb{R})$ and $\xi \in \mathbb{R}$,
\begin{align*}
\int_{-\infty}^{\infty} e^{-i \xi x} \mathbf{D}_{x-}^\Phi f(x) dx &=\Phi(i\xi) \widehat{f}(\xi)\\
\int_{-\infty}^{\infty} e^{-i \xi x} \mathbf{D}_{x+}^\Phi f(x) dx &=\Phi(-i\xi) \widehat{f}(\xi).
\end{align*}
We know that the same Fourier transform can be obtained from other operators, such as Weyl-type derivatives or generalized Riemann-Liouville derivatives on the real line (see \cite[Lemma 2.9]{toaldo2015convolution}). In this paper, we have chosen to focus on Marchaud-type derivatives for historical reasons as well. In fact, as noted in \cite{coldov2022halfline}, the operator $-\mathbf{D}_{x+}^\Phi$, restricted on $x \in \mathbb{R}^+$, coincides with 
\begin{align*}
\int_0^\infty (f(y) - f(0)) \Pi^\Phi(dy),
\end{align*}
which is the Feller integral condition introduced in \cite{feller}.
\end{remark}
\section{Non-local sticky Brownian motions}
In \cite{mirko-fbvp2}, the author presents a delayed Brownian motion at the boundary, as the inverse of a subordinator, related to fractional conditions. Now, by including a skewed coin toss, we take that same process to generalize the two-sided sticky Brownian motion.

First we introduce the one side sticky Brownian motion $B^s$, that is a generalization of \cite[Section 4]{mirko-fbvp2} or a special case of \cite{coldov2022halfline},
\begin{align}
\label{def:Bsticky}
B^s_t = B^+ \circ T_t^{-1}=B^+_{T_t^{-1}},
\end{align}
where $T_t^{-1}$ is the right of
\begin{align*}
T_t= t+H^\Phi \circ \eta \gamma_t,
\end{align*}
where $\eta>0$ is a positive constant, $H^\Phi$ is the subordinator, related to the symbol $\Phi$, independent of the reflecting Brownian motion $B^+$, and $\gamma$ is the local time at zero of $B^+$. The process $B^s$ is generated by the heat equation with Caputo-Dzherbashian-type derivatives at the boundary (the operator defined in \eqref{Caputo}), in particular
\begin{align*}
\left(\eta \mathfrak{D}^\Phi_t u (t,x) -\frac{\partial}{\partial x} u(t,x) \right)_{x=0}=0, \quad t>0,
\end{align*}
for more details on this type of non-local dynamic conditions, please refer to the works mentioned above. It is a non-Markov process, since in zero it has intervals of consistency (plateaus), due to the inverse of the subordinator $H^\Phi$. After these plateaus, it reflects in the positive half line as $B^+$. To construct the two-sided sticky process, we first need the following result.
	\begin{figure}[h!]
	\centering
	\includegraphics[width=8cm]{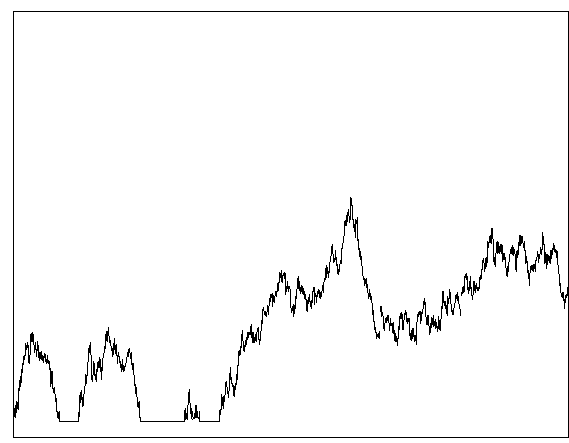} 
	\caption{A possible paths for $B^{s}$.}
	\label{fig:sticky-bm}
\end{figure}
\begin{lemma}
	For $f$ continuous and bounded and $\lambda >0$, we have
	\begin{align}
	\label{resolvent: B^s}
	\mathbf{E}_0\left[\int_0^\infty e^{-\lambda t} f(B^s_t) dt\right]=\frac{1}{\eta \Phi(\lambda) + \sqrt{\lambda}}\int_0^\infty e^{-y \sqrt{\lambda}} f(y) dy + \frac{\eta \Phi(\lambda)}{\lambda} \frac{f(0)}{\eta \Phi(\lambda) +\sqrt{\lambda}}
	\end{align}
\end{lemma}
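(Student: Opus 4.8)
The plan is to pass to the resolvent and use the time change $t=T_s$ to express it through the reflecting Brownian motion $B^+$ and the subordinator $H^\Phi$, separating the contribution of the time $B^s$ moves from that of the time it sits at $0$. Since $T$ is strictly increasing and right-continuous, its right inverse $T^{-1}$ is continuous and is constant, equal to $s_0$, exactly on the interval $[T_{s_0-},T_{s_0})$ attached to each of the countably many jump times $s_0$ of $T$. Because $\gamma$ is continuous and $\Phi$ carries no drift, $T_s=s+H^\Phi_{\eta\gamma_s}$ has continuous part $s\mapsto s$ and jumps only at times $s_0$ in the support of $d\gamma$, hence with $B^+_{s_0}=0$; these jump times are exactly the inverse local times $s_0=\zeta_{v/\eta}$, where $\zeta_\ell:=\inf\{s:\gamma_s>\ell\}$ and $v$ ranges over the jumps of $H^\Phi$, and there $\Delta T_{s_0}=\Delta H^\Phi_v$ and $T_{s_0-}=\zeta_{v/\eta}+H^\Phi_{v-}$. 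Carrying out the substitution on the range of $T$ and treating the gaps explicitly yields
\begin{align*}
\int_0^\infty e^{-\lambda t}f(B^s_t)\,dt=\int_0^\infty e^{-\lambda T_s}f(B^+_s)\,ds+\frac{f(0)}{\lambda}\sum_{s_0}e^{-\lambda T_{s_0-}}\bigl(1-e^{-\lambda\Delta T_{s_0}}\bigr),
\end{align*}
all exchanges of sum, integral and expectation being legitimate since $|f|\le\|f\|_\infty$ and $\lambda>0$.

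For the first term I would write $e^{-\lambda T_s}=e^{-\lambda s}e^{-\lambda H^\Phi_{\eta\gamma_s}}$; conditioning on $B^+$ and using the independence of $H^\Phi$ together with \eqref{LapH} turns it into the elastic-type resolvent $\mathbf{E}_0[\int_0^\infty e^{-\lambda s-\eta\Phi(\lambda)\gamma_s}f(B^+_s)\,ds]$. This I would evaluate by the identity $e^{-c\gamma_s}=1-c\int_0^s e^{-c\gamma_r}\,d\gamma_r$ (valid because $\gamma$ is continuous), Fubini, and the Markov property of $B^+$ at time $r$: since $d\gamma_r$ is carried by $\{B^+_r=0\}$, it reduces to $R^{B^+}_\lambda f(0)\,\bigl(1-c\,\mathbf{E}_0[\int_0^\infty e^{-\lambda r-c\gamma_r}\,d\gamma_r]\bigr)$ with $c=\eta\Phi(\lambda)$, where $R^{B^+}_\lambda f(0)=\frac{1}{\sqrt\lambda}\int_0^\infty e^{-y\sqrt\lambda}f(y)\,dy$ by \eqref{lapg1}; the remaining expectation, after the substitution $r=\zeta_\ell$ and $\mathbf{E}_0[e^{-\lambda\zeta_\ell}]=e^{-\ell\sqrt\lambda}$ (the normalisation of $\gamma$ used throughout, consistent with $\mathbf{E}_x[e^{-\lambda\tau_0}]=e^{-\sqrt\lambda|x|}$), equals $(c+\sqrt\lambda)^{-1}$. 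Collecting the factors gives $\frac{1}{\eta\Phi(\lambda)+\sqrt\lambda}\int_0^\infty e^{-y\sqrt\lambda}f(y)\,dy$ for the first term. (Equivalently, one may solve $v''=\lambda v-f$ on $(0,\infty)$ with the Robin condition $v'(0^+)=c\,v(0)$, using \eqref{lapg2} to identify $\partial_x R^D_\lambda f(0^+)$.)

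For the second term the sum is $\sum_{v:\Delta H^\Phi_v>0}e^{-\lambda\zeta_{v/\eta}}e^{-\lambda H^\Phi_{v-}}(1-e^{-\lambda\Delta H^\Phi_v})$. I would take the expectation by first conditioning on $B^+$, so that $v\mapsto\zeta_{v/\eta}$ is frozen and independent of $H^\Phi$, and then applying the compensation formula to the Poisson point process of jumps of $H^\Phi$, whose intensity is $dv\,\Pi^\Phi(dz)$; with $\int_0^\infty(1-e^{-\lambda z})\Pi^\Phi(dz)=\Phi(\lambda)$ from \eqref{LevKinFormula} and $\mathbf{E}_0[e^{-\lambda H^\Phi_{v-}}]=e^{-v\Phi(\lambda)}$ this gives $\Phi(\lambda)\int_0^\infty\mathbf{E}_0[e^{-\lambda\zeta_{v/\eta}}]\,e^{-v\Phi(\lambda)}\,dv=\Phi(\lambda)\int_0^\infty e^{-(v/\eta)\sqrt\lambda-v\Phi(\lambda)}\,dv=\frac{\eta\Phi(\lambda)}{\sqrt\lambda+\eta\Phi(\lambda)}$, so the second term equals $\frac{\eta\Phi(\lambda)}{\lambda}\cdot\frac{f(0)}{\eta\Phi(\lambda)+\sqrt\lambda}$. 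Adding the two contributions yields \eqref{resolvent: B^s}. As a sanity check, $f\equiv1$ makes both sides equal to $1/\lambda$, and $\Phi\equiv0$ recovers the reflecting Brownian motion resolvent.

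The step I expect to be the main obstacle is the bookkeeping in the time change: one must verify carefully that the gaps in the range of $T$ sit precisely at the jumps of $H^\Phi$ seen on the local-time clock, that $B^+$ stays at $0$ throughout these gaps, and that the continuous part of $dT_s$ is exactly $ds$. Once this is in place, the two resolvent evaluations are routine applications of the strong Markov property, the compensation formula, and the Laplace transforms \eqref{LapH}, \eqref{lapg1} and \eqref{lapg2} already recorded.
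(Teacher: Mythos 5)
Your argument is correct and leads to the stated formula, and its skeleton coincides with the paper's: you split the resolvent along the time change into the part where the clock runs as $ds$ (the process moving as $B^+$) and the part coming from the gaps in the range of $T$ (the plateaus at $0$), which is exactly the paper's decomposition $I_1+I_2$ with $dT_t=dt+d(H^\Phi_{\eta\gamma_t})$ and $f(B^+)=f(0)$ on the support of the jump part. Where you differ is in how the two pieces are evaluated. For the first piece the paper integrates $e^{-\eta\Phi(\lambda)w}$ directly against the explicit joint Laplace transform \eqref{joint-laplace} of $(B^+_t,\gamma_t)$, while you use the identity $e^{-c\gamma_s}=1-c\int_0^s e^{-c\gamma_r}d\gamma_r$, the Markov property under $d\gamma_r$, and $\mathbf{E}_0[e^{-\lambda\zeta_\ell}]=e^{-\ell\sqrt{\lambda}}$; these are equivalent, and note that the correct way to pin down the local-time normalisation (hence the exponent $\sqrt{\lambda}$ in $\mathbf{E}_0[e^{-\lambda\zeta_\ell}]$) is \eqref{joint-laplace} itself rather than the hitting-time formula, which carries no information about how $\gamma$ is scaled. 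For the second piece the paper performs two integrations by parts on Stieltjes integrals of jump processes and again invokes \eqref{joint-laplace}, whereas you apply the compensation formula to the Poisson point process of jumps of $H^\Phi$ with intensity $dv\,\Pi^\Phi(dz)$, conditionally on $B^+$; your route is arguably cleaner, since it makes explicit where the gaps of $T$ sit ($s_0=\zeta_{v/\eta}$, $\Delta T_{s_0}=\Delta H^\Phi_v$, $B^s\equiv 0$ there) and avoids the bookkeeping of left limits and jump corrections hidden in the paper's integration by parts, at the cost of a slightly longer setup; the paper's route is shorter because the joint law \eqref{joint-laplace} does all the work at once. Your identification of the jump times with $\zeta_{v/\eta}$ is fine almost surely (the jump levels of $H^\Phi$ a.s.\ avoid the countably many levels at which $\gamma$ is flat, by independence), and the sanity checks $f\equiv 1$ and $\Phi\equiv 0$ are consistent with the statement.
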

\begin{proof}
	We have that
	\begin{align*}
\mathbf{E}_0\left[\int_0^\infty e^{-\lambda t} f(B^s_t) dt\right]&=\mathbf{E}_0\left[\int_0^\infty e^{-\lambda t} f(B^+_{T_t^{-1}}) dt\right]\\
&=\mathbf{E}_0\left[\int_0^\infty e^{-\lambda T_t} f(B^+_t)\, dT_t\right]\\
&=\mathbf{E}_0\left[\int_0^\infty e^{-\lambda t} e^{-\lambda H^\Phi_{\eta \gamma_t}} f(B^+_t)\, dt\right] + \mathbf{E}_0\left[\int_0^\infty e^{-\lambda t} e^{-\lambda H^\Phi_{\eta \gamma_t}} f(B^+_t)\, d(H^\Phi_{\eta \gamma_t})\right]\\
&=: I_1 + I_2.
	\end{align*}
	For $I_1$ we recall \eqref{LapH} and the joint Laplace transform of $B^+$ and its local time (the joint law is given in \cite[Section 4, formula 10]{itomckean-halfline})
	\begin{align}
	\label{joint-laplace}
	\int_{0}^{\infty} e^{-\lambda t} \mathbf{P}_0(B^+_t \in dy, \gamma_t \in dw)=e^{-(y+w) \sqrt{\lambda}} dy \, dw , \quad \lambda >0,
	\end{align}
	then we obtain
	\begin{align*}
	I_1&=\mathbf{E}_0\left[\int_0^\infty e^{-\lambda t} e^{-\lambda H^\Phi_{\eta \gamma_t}} f(B^+_t)\, dt\right]\\
	&=\mathbf{E}_0\left[\int_0^\infty e^{-\lambda t} e^{-\lambda \Phi(\lambda) {\eta \gamma_t}} f(B^+_t)\, dt\right]\\
	&=\int_{0}^{\infty} \int_{0}^{\infty} f(y) e^{-w \Phi(\lambda) \eta} e^{-(y+w) \sqrt{\lambda}} dy \, dw\\
	&=\frac{1}{\eta \Phi(\lambda) + \sqrt{\lambda}}\int_0^\infty e^{-y \sqrt{\lambda}} f(y) dy.
	\end{align*}
	For $I_2$, by integrating by parts and by using \eqref{LapH}, we get
	\begin{align*}
	I_2&=\mathbf{E}_0\left[\int_0^\infty e^{-\lambda t} e^{-\lambda H^\Phi_{\eta \gamma_t}} f(B^+_t)\, d(H^\Phi_{\eta \gamma_t})\right]\\
	&=f(0) \mathbf{E}_0\left[\int_0^\infty e^{-\lambda t} e^{-\lambda H^\Phi_{\eta \gamma_t}} d(H^\Phi_{\eta \gamma_t})\right]\\
	&=f(0) \mathbf{E}_0\left[\int_0^\infty e^{-\lambda  \gamma^{-1}_t} e^{-\lambda H^\Phi_{\eta t}} d(H^\Phi_{\eta t})\right]\\
	&=\frac{f(0)}{\lambda} \left(-1-\int_{0}^{\infty} e^{-\lambda H^\Phi_{\eta t}} d(e^{-\lambda\gamma^{-1}_t})\right)\\
	&=\frac{f(0)}{\lambda} \left(-1-\int_{0}^{\infty} e^{-\eta \Phi(\lambda) t} d(e^{-\lambda \gamma^{-1}_t})\right)\\
	&=\frac{f(0)}{\lambda} \left(-1-	\left(-1-\eta\Phi(\lambda)\int_{0}^{\infty} e^{-\eta \Phi(\lambda)t} e^{-\lambda\gamma^{-1}_t} dt\right)\right)\\
	&=\frac{f(0)}{\lambda} \eta\Phi(\lambda) \int_{0}^{\infty} e^{-\eta \Phi(\lambda)t} e^{-\lambda\gamma^{-1}_t} dt.
	\end{align*}
	The last integral we have to calculate is the following
	\begin{align*}
	 \int_{0}^{\infty} e^{-\eta \Phi(\lambda)t} e^{-\lambda\gamma^{-1}_t} dt&= \int_{0}^{\infty} e^{-\eta \Phi(\lambda)\gamma_t} e^{-\lambda t} d\gamma_t\\
	 &=-\frac{1}{\eta \Phi(\lambda)} \int_{0}^{\infty} e^{-\lambda t} d  e^{-\eta \Phi(\lambda)\gamma_t}\\
	 &=-\frac{1}{\eta \Phi(\lambda)}\left(-1+\lambda\int_{0}^{\infty} e^{-\lambda t} e^{-\eta \Phi(\lambda)\gamma_t} dt  \right),
	\end{align*}
	by integrating \eqref{joint-laplace} in $dy$, we obtain
	\begin{align*}
-\frac{1}{\eta \Phi(\lambda)}\left(-1+\lambda\int_{0}^{\infty} e^{-\lambda t} e^{-\eta \Phi(\lambda)\gamma_t} dt  \right)&=-\frac{1}{\eta \Phi(\lambda)}\left(-1+\lambda\int_{0}^{\infty} \frac{1}{\sqrt{\lambda}}e^{-\sqrt{\lambda} w} e^{-\eta \Phi(\lambda)w} dw \right)\\
&=-\frac{1}{\eta \Phi(\lambda)} \left(-1+\frac{\sqrt{\lambda}}{\sqrt{\lambda} + \eta \Phi(\lambda)}\right)\\
&=\frac{1}{\eta \Phi(\lambda) + \sqrt{\lambda}}.
	\end{align*}
	Then, we provide
	\begin{align*}
	I_2=\frac{\eta \Phi(\lambda)}{\lambda} \frac{f(0)}{\eta \Phi(\lambda) +\sqrt{\lambda}},
	\end{align*}
	and the claim \eqref{resolvent: B^s} follows from $I_1+I_2$.
\end{proof}
\begin{remark}
	We observe that $a(t):= H^\Phi_{\gamma_t}$ is a right continuous and nondecreasing process starting from zero, then from \cite[Lemma 2.2, section V]{blumenthal}, for every nonnegative Borel measurable function $f$ on the positive half-line vanishing at infinity,
	\begin{align*}
	\int_{(0,\infty)} f(t) d a(t)= \int_0^\infty f(a^{-1} (t)) dt,
	\end{align*}
	where $a^{-1} (t)$ is the right inverse of $a(t)$. In this sense we interpret the integrals of the last lemma.
\end{remark}
The process $B^s$ has the same excursions on $(0,\infty)$ of the process $B^+$, since it is a reflecting Brownian motion delayed in zero, but now the zeros have not Lebesgue measure zero. So, in a construction like that of skew Brownian motion, we now have to take into account the plateaus as well.

Let us define the two-sided non-local sticky Brownian motion. Let $J_1, J_2,...$ be a fixed enumeration of the excursion intervals of the process $B^+$ and $I_1, I_2,...$ be the subsequent plateaus (intervals of consistency) of $B^s$. As before, $\{A_m : m = 1, 2, . . . \}$ is an i.i.d. sequence, independent of $B^+$ and $H^\Phi$, of Bernoulli $\pm1$ valued random variables also defined on the same probability space with $\mathbf{P}(A_i=1)=\nu$. Then, the $\nu -$sticky
Brownian motion process $B^{\nu, s} = \{B^{\nu, s}_t: t\geq 0 \}$ is defined as
\begin{align}
\label{2sideBs}
B^{\nu , s} _t = \displaystyle \sum_{m=1}^{\infty} \mathbf{1}_{\{J_m \cup I_m\}}(t) A_m B^s_t.
\end{align}
The process $B^{\nu , s}$ is delayed in zero, after each plateau, it can reflect positively or negatively due to the Bernoulli variable. The difference with sticky Brownian motion is that $B^{\nu , s}$ is slowed down by the inverse of a subordinator, which makes the process non-Markovian.
	\begin{figure}[h!]
	\centering
	\includegraphics[width=8cm]{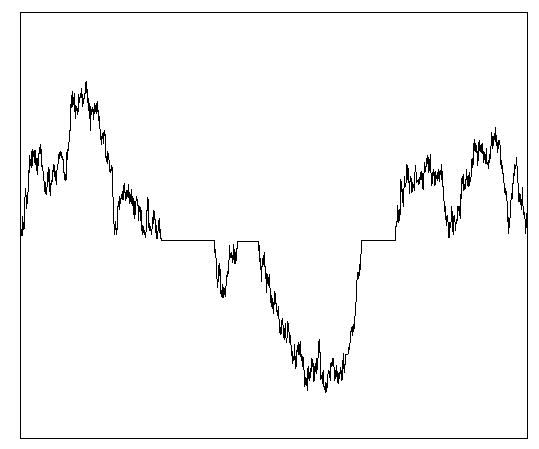} 
	\caption{A possible path for $B^{\nu , s}$.}
	\label{fig:non-local-sticky-bm}
\end{figure}
We introduce the following space functions, for the boundary conditions,
\begin{align*}
D_2:=\{\varphi: \forall x \in \mathbb{R} , \, \varphi(\cdot, x) \in W^{1,\infty} (0,\infty) \text{ s.t. }\mathfrak{D}_t^\Phi \varphi(t,x) \big\vert_{x=0} \text{ exists} \}.
\end{align*}
\begin{theorem}
\label{thm:non-local-sticky}
	The probabilistic representation of the solution $u \in C^{1,2}([0,\infty) \times \mathbb{R} \setminus \{0\}) \cap D_2$ and continuous in $x=0$, for $\nu \in (0,1)$ and $f$ continuous and bounded,
\begin{align*}
\begin{cases}
\frac{\partial}{\partial t} u(t,x)= \frac{\partial^2}{\partial x^2} u(t,x) \quad &(t,x) \in (0,\infty) \times \mathbb{R}\setminus \{0\} \\
\eta \mathfrak{D}_t^\Phi u(t,0) - \nu \frac{\partial}{\partial x} u(t,0^+) + (1-\nu )\frac{\partial}{\partial x} u(t,0^-)=0\quad    &t>0\\
u(0,x) =f(x) \quad &x \in \mathbb{R}
\end{cases}
\end{align*}
is the following
\begin{align*}
u(t,x)= \mathbf{E}_x\left[f(B^{\nu, s} _t)\right].
\end{align*}
\end{theorem}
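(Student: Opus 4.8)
The plan is to mimic the strategy of Theorem \ref{thm:non-local-skew}, working at the level of the resolvent $R_\lambda f(x)=\int_0^\infty e^{-\lambda t}\mathbf{E}_x[f(B^{\nu,s}_t)]\,dt$ and showing that it satisfies the resolvent version of the stated boundary value problem, then inverting the Laplace transform. First I would introduce the hitting time $\tau_0=\inf\{t>0:B_t=0\}$; since $B^{\nu,s}$ agrees with a free Brownian motion before $\tau_0$, the strong Markov property (the process restarts at $0$ after $\tau_0$, with the plateau/excursion structure independent of the past) gives the decomposition
\begin{align*}
R_\lambda f(x)=\int_{x\cdot y>0}\bigl(g(t,x-y)-g(t,x+y)\bigr)*f + e^{-\sqrt{\lambda}|x|}R_\lambda f(0),
\end{align*}
exactly as in the skew case, using that outside zero the paths are continuous so $y$ cannot be reached from $x$ of opposite sign without passing through $0$. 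From the Dirichlet-kernel part one reads off $\partial_x^2 R_\lambda f(x)=\lambda R_\lambda f(x)-f(x)$ for $x\neq 0$, hence the heat equation on $(0,\infty)\times\mathbb{R}\setminus\{0\}$, and the initial condition $u(0,x)=f(x)$ follows from continuity of the semigroup for bounded continuous $f$, as in \cite[Proposition 1]{lejay}.

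The new ingredient is the computation of $R_\lambda f(0)$. Using the definition \eqref{2sideBs}, I would split the integral $\int_0^\infty e^{-\lambda t}f(B^{\nu,s}_t)\,dt$ over the excursion-plus-plateau blocks $J_m\cup I_m$: on each block, with probability $\nu$ the process behaves like $B^s$ (which has positive excursions) and with probability $1-\nu$ like $-B^s$. Since the plateaus of $B^s$ occur at $0$, where $f(B^s_t)=f(0)=f(-B^s_t)$, summing the contributions and using the independence of the $A_m$ yields
\begin{align*}
R_\lambda f(0)=\nu\,\mathbf{E}_0\!\left[\int_0^\infty e^{-\lambda t}f(B^s_t)\,dt\right]+(1-\nu)\,\mathbf{E}_0\!\left[\int_0^\infty e^{-\lambda t}f(-B^s_t)\,dt\right],
\end{align*}
and the Lemma gives each term explicitly: the first equals $\tfrac{1}{\eta\Phi(\lambda)+\sqrt{\lambda}}\int_0^\infty e^{-y\sqrt{\lambda}}f(y)\,dy+\tfrac{\eta\Phi(\lambda)}{\lambda}\tfrac{f(0)}{\eta\Phi(\lambda)+\sqrt{\lambda}}$, and the second is the mirror image with $\int_0^\infty e^{-y\sqrt\lambda}f(-y)\,dy$. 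Note $R_\lambda f(0)=u$ evaluated at $0$, and $\tfrac{\eta\Phi(\lambda)}{\lambda}f(0)$ is precisely the Laplace transform of $\eta\mathfrak{D}_t^\Phi u(t,0)$ up to the $\Phi(\lambda)\widetilde u(\lambda)$ term, by \eqref{LapCaputo}.

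Then I would verify the boundary condition directly on the resolvent. Writing $R_\lambda f(x)=R_\lambda^D f(x)+e^{-\sqrt\lambda|x|}R_\lambda f(0)$ on each side (with $R_\lambda^D$ resp. $R_\lambda^{-D}$ the killed-at-zero resolvents, which vanish at $0$ and have vanishing first derivative contributions matching the skew computation), and using $\partial_x e^{-\sqrt\lambda x}\big|_{x=0^+}=-\sqrt\lambda$, $\partial_x e^{\sqrt\lambda x}\big|_{x=0^-}=\sqrt\lambda$, together with the explicit forms of $R_\lambda^D f$ derived from \eqref{lapg1}–\eqref{lapg2} whose normal derivatives at $0$ are $\mp\int_0^\infty e^{-y\sqrt\lambda}f(\pm y)\,dy$, I would compute
\begin{align*}
-\nu\,\partial_x R_\lambda f(x)\big|_{x=0^+}+(1-\nu)\,\partial_x R_\lambda f(x)\big|_{x=0^-}
\end{align*}
and check it equals $-\eta\Phi(\lambda)R_\lambda f(0)+\tfrac{\eta\Phi(\lambda)}{\lambda}f(0)$, i.e. minus the Laplace transform of $\eta\mathfrak{D}_t^\Phi u(t,0)$. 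Substituting the formula for $R_\lambda f(0)$, the $\sqrt\lambda$ terms from the exponential derivatives combine with the $\eta\Phi(\lambda)+\sqrt\lambda$ denominators so that everything cancels, giving the resolvent identity $\eta\Phi(\lambda)R_\lambda f(0)-\tfrac{\eta\Phi(\lambda)}{\lambda}f(0)-\nu\partial_x R_\lambda f(0^+)+(1-\nu)\partial_x R_\lambda f(0^-)=0$. Inverting the Laplace transform, and recognizing the first two terms as $\widetilde{\eta\mathfrak{D}_t^\Phi u}(\lambda,0)$ via \eqref{LapCaputo}, yields the claimed dynamic boundary condition. The main obstacle I anticipate is the bookkeeping in the decomposition of $R_\lambda f(0)$: one must argue carefully that the plateaus $I_m$ contribute only through $f(0)$ and that, because $B^s$ already incorporates the time-delay $T^{-1}$, no extra normalization by $\Phi(\sqrt\lambda)$ (as appeared in \eqref{resolvent:Bpallino} for $B^\bullet$) is needed here — the delay is in time, not a spatial jump, so the Lemma already packages the full contribution. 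A secondary subtlety is justifying the interchange of sum and expectation over the infinitely many blocks and the continuity needed to invert the Laplace transform pointwise, both handled by boundedness of $f$ and dominated convergence exactly as in the proof of Theorem \ref{thm:non-local-skew}.
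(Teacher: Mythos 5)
Your proposal is correct and follows essentially the same route as the paper's proof: decompose the resolvent at $\tau_0$ using the Markov property on the excursion-plus-plateau blocks, compute $R_\lambda f(0)$ as the $\nu$/$(1-\nu)$ mixture of the $B^s$ and $-B^s$ resolvents via the Lemma \eqref{resolvent: B^s}, verify the Laplace-transformed dynamic condition \eqref{laplace:BC} by direct cancellation of the $\eta\Phi(\lambda)+\sqrt{\lambda}$ factors, and invert the Laplace transform. The only cosmetic difference is that you combine the positive and negative contributions into one formula for $R_\lambda f(0)$, whereas the paper checks the condition separately for positive and negative paths; your observation that no extra $\Phi(\sqrt{\lambda})$ normalization is needed (unlike \eqref{resolvent:Bpallino}) is exactly right.
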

\begin{proof}
	Let us focus on the Laplace transform of $u$, for $\lambda >0$,
	\begin{align*}
	R_\lambda f(x)&= \int_0^\infty e^{-\lambda t} u(t,x) dt\\
	&= \int_{0}^{\infty} e^{-\lambda t} \mathbf{E}_x\left[f(B^{\nu, s} _t)\right] \, dt.
	\end{align*}
	We recall that
	\begin{align*}
	\tau_0=\inf \{t>0: B_t =0\}
	\end{align*}
	and since, before hitting zero, $B^{\nu,s}$ behaves like $B$, $\tau_0$ is also the hitting time at zero of $B^{\nu,s}$.
	 The problem with respect to Theorem \ref{thm:non-local-skew} is that now we are dealing with a non-Markovian process, and we cannot use the same arguments as before. We decompose the positive half-line in the union of $\{J_m\}$ and $\{I_m\}$, with $m \geq 1$, as in \eqref{2sideBs}. But on every interval $J_m$ and on every interval $I_m$, the process is strongly Markovian, in fact, in the first case, it behaves like a Brownian motion, while in the second case, it is constant. So even though it is not globally Markovian, it is Markovian on all these intervals. We have
	 \begin{align*}
	 R_\lambda f(x) &= \int_{0}^{\infty} e^{-\lambda t} \mathbf{E}_x\left[f(B^{\nu, s} _t)\right] \, dt\\
	 &= \int_{0}^{\tau_0} e^{-\lambda t} \mathbf{E}_x\left[f(B^{\nu, s} _t)\right] \, dt + \int_{\tau_0}^{\infty} e^{-\lambda t} \mathbf{E}_x\left[f(B^{\nu, s} _t)\right] \, dt\\
	 &=  \int_{0}^{\infty} e^{-\lambda t} \mathbf{E}_x\left[f(B^{D} _t)\right] \, dt + \int_{I_1} e^{-\lambda t} \mathbf{E}_x\left[f(B^{\nu, s} _t)\right] \, dt + \int_{J_2} e^{-\lambda t} \mathbf{E}_x\left[f(B^{\nu, s} _t)\right] \, dt + \dots\\
	 &=\int_{0}^{\infty} e^{-\lambda t} \mathbf{E}_x\left[f(B^{D} _t)\right] \, dt  + \mathbf{E}_x[e^{-\lambda \tau_0}] \int_{0}^{\infty} e^{-\lambda t} \mathbf{E}_0\left[f(B^{\nu, s} _t)\right] \, dt,
	 \end{align*}
	 where we have used the strong Markov property in each interval and the fact that the process $B^{\nu, s}$, when starts from zero, has immediately the plateau $I_1$. With the same arguments of Theorem \ref{thm:non-local-skew}, we write the solution as
	 \begin{align*}
	 u(t,x)= \int_{x\cdot y>0} \left(g(t,x-y) - g(t,x+y)\right)f(y) dy + \int_0^t \frac{x}{s} g(s,x) u(t-s,0) ds,
	 \end{align*}
	 and, once again, it solves the heat equation in which we only need to check the conditions at zero. The time Laplace transform of the solution is
	 \begin{align*}
	 R_\lambda f(x) &= \int_{0}^{\infty} e^{-\lambda t} \int_{x\cdot y>0} \left(g(t,x-y) - g(t,x+y)\right)f(y) dy\, dt + e^{-\sqrt{\lambda} \vert x \vert} \int_{0}^{\infty} e^{-\lambda t} \mathbf{E}_0\left[f(B^{\nu,s } _t)\right] \, dt\\
	 &= \int_{0}^{\infty} e^{-\lambda t} \int_{x\cdot y>0} \left(g(t,x-y) - g(t,x+y)\right)f(y) dy\, dt + e^{-\sqrt{\lambda} \vert x \vert} R_\lambda f(0).
	 \end{align*}
	 The conditions we need have the following Laplace transform
	 \begin{align}
	 \label{laplace:BC}
	 &\int_0^\infty e^{-\lambda t} \left(\eta \mathfrak{D}_t^\Phi u(t,0) - \nu \frac{\partial}{\partial x} u(t,0^+) + (1-\nu )\frac{\partial}{\partial x} u(t,0^-)\right) dt \notag\\
	 &= \eta \Phi(\lambda) R_\lambda f(0) - \eta \frac{\Phi(\lambda)}{\lambda} f(0) - \nu \frac{\partial}{\partial x} R_\lambda f(x) \big\vert_{x=0^+} +(1-\nu) R_\lambda f(x) \big\vert_{x=0^-},
	 \end{align}
	 where we used \eqref{LapCaputo} for the Laplace transform of the non-local time operator. Now, let us understand how $R_\lambda f(0)$ is composed. For $\lambda >0$, from the skewness of the process, we get
	 \begin{align}
	 \label{resolvent-sticky-zero}
	 R_\lambda f(0)&= \int_{0}^{\infty} e^{-\lambda t} \mathbf{E}_0\left[f(B^{\nu,s} _t)\right] \, dt \notag\\
	 &=\mathbf{E}_0\left[\int_0^\infty e^{-\lambda t} \sum_{n=1}^{\infty} \mathbf{1}_{I_n \cup J_n}(t) f(A_n B_t^s) dt\right] \notag\\
	 &= \sum_{n=1}^\infty \mathbf{E}_0 \left[\int_{I_n \cup J_n} e^{-\lambda t}f(A_n B^s_t) dt\right] \notag\\
	 &=\nu\sum_{n=1}^\infty \mathbf{E}_0 \left[\int_{I_n \cup J_n} e^{-\lambda t}f(B_t^s) dt\right] +(1-\nu) \sum_{n=1}^\infty \mathbf{E}_0 \left[\int_{I_n \cup J_n} e^{-\lambda t}f(- B_t^s) dt\right] \notag\\
	 &=\nu \mathbf{E}_0 \left[\int_{0}^\infty e^{-\lambda t}f(B_t^s) dt\right] +(1-\nu) \mathbf{E}_0 \left[\int_{0}^\infty e^{-\lambda t}f(- B_t^s) dt\right] \notag \\
	 &=\begin{cases}
	\nu \left(\frac{1}{\eta \Phi(\lambda) + \sqrt{\lambda}}\int_0^\infty e^{-y \sqrt{\lambda}} f(y) dy + \frac{\eta \Phi(\lambda)}{\lambda} \frac{f(0)}{\eta \Phi(\lambda) +\sqrt{\lambda}}\right) \quad y \geq 0\\
	 \\(1-\nu)\left(\frac{1}{\eta \Phi(\lambda) + \sqrt{\lambda}}\int_{-\infty}^0 e^{-y \sqrt{\lambda}} f(y) dy + \frac{\eta \Phi(\lambda)}{\lambda} \frac{f(0)}{\eta \Phi(\lambda) +\sqrt{\lambda}}\right) \quad y <0,
	 \end{cases}
	 \end{align}
	 where in the last equality we have used \eqref{resolvent: B^s} for the process $B^s$. We provide, by using Laplace transform, that the process we are considering, $B^{\nu, s}$, satisfies the conditions at zero \eqref{laplace:BC}. Indeed, we see, for $x>0, \, y>0$,
	 \begin{align*}
	R_\lambda f(x)= R_\lambda^D f(x) + e^{-\sqrt{\lambda} x} \nu \left(\frac{1}{\eta \Phi(\lambda) + \sqrt{\lambda}}\int_0^\infty e^{-y \sqrt{\lambda}} f(y) dy + \frac{\eta \Phi(\lambda)}{\lambda} \frac{f(0)}{\eta \Phi(\lambda) +\sqrt{\lambda}}\right)
	 \end{align*}
	 and, for $x<0, y>0$
	 \begin{align*}
	 R_\lambda f(x)= e^{\sqrt{\lambda} x} \nu \left(\frac{1}{\eta \Phi(\lambda) + \sqrt{\lambda}}\int_0^\infty e^{-y \sqrt{\lambda}} f(y) dy + \frac{\eta \Phi(\lambda)}{\lambda} \frac{f(0)}{\eta \Phi(\lambda) +\sqrt{\lambda}}\right).
	 \end{align*}
	 Then, for $y>0$, we have that \eqref{laplace:BC} is
	 \begin{align*}
	 &\eta \Phi(\lambda) R_\lambda f(0) - \eta \frac{\Phi(\lambda)}{\lambda} f(0) - \nu \frac{\partial}{\partial x} R_\lambda f(x) \big\vert_{x=0^+} +(1-\nu) R_\lambda f(x) \big\vert_{x=0^-}\\
	 &= \eta \Phi(\lambda) \left( \nu \frac{1}{\eta \Phi(\lambda) + \sqrt{\lambda}}\int_0^\infty e^{-y \sqrt{\lambda}} f(y) dy + \nu \frac{\eta \Phi(\lambda)}{\lambda} \frac{f(0)}{\eta \Phi(\lambda) +\sqrt{\lambda}} \right) -\nu \eta \frac{\Phi(\lambda)}{\lambda} f(0) + \\
	 \ &- \nu \left(\int_{0}^{\infty} e^{-\sqrt{\lambda y}} f(y) dy -\sqrt{\lambda}\nu \frac{1}{\eta \Phi(\lambda) + \sqrt{\lambda}}\int_0^\infty e^{-y \sqrt{\lambda}} f(y) dy - \sqrt{\lambda} \nu \frac{\eta \Phi(\lambda)}{\lambda} \frac{f(0)}{\eta \Phi(\lambda) +\sqrt{\lambda}}\right)+\\
	 \ &+(1-\nu) \left(\sqrt{\lambda}\nu \frac{1}{\eta \Phi(\lambda) + \sqrt{\lambda}}\int_0^\infty e^{-y \sqrt{\lambda}} f(y) dy + \sqrt{\lambda} \nu \frac{\eta\Phi(\lambda)}{\lambda} \frac{f(0)}{\eta \Phi(\lambda) +\sqrt{\lambda}}\right)\\
	 &=\eta \Phi(\lambda) \left( \nu \frac{1}{\eta \Phi(\lambda) + \sqrt{\lambda}}\int_0^\infty e^{-y \sqrt{\lambda}} f(y) dy + \nu \frac{\eta \Phi(\lambda)}{\lambda} \frac{f(0)}{\eta \Phi(\lambda) +\sqrt{\lambda}} \right) -\nu \eta \frac{\Phi(\lambda)}{\lambda} f(0) +\\
	 \ & - \nu \int_{0}^{\infty} e^{-\sqrt{\lambda y}} f(y) dy + \sqrt{\lambda}\nu \frac{1}{\eta \Phi(\lambda) + \sqrt{\lambda}}\int_0^\infty e^{-y \sqrt{\lambda}} f(y) dy + \sqrt{\lambda} \nu \frac{ \eta \Phi(\lambda)}{\lambda} \frac{f(0)}{\eta \Phi(\lambda) +\sqrt{\lambda}}\\
	 &= \left(\frac{\eta \Phi(\lambda) \nu + \sqrt{\lambda} \nu}{{\eta \Phi(\lambda) + \sqrt{\lambda}}} - \nu \right) \int_{0}^{\infty} e^{-\sqrt{\lambda y}} f(y) dy+ \nu \eta \frac{  \Phi(\lambda)}{\lambda} \frac{f(0)}{ \eta \Phi(\lambda) +\sqrt{\lambda}} \left(\eta \Phi(\lambda) + \sqrt{\lambda}\right) -\nu \eta \frac{\Phi(\lambda)}{\lambda} f(0) \\
	 &=0.
	 \end{align*}
	 Then, for positive paths, the  conditions at zero are satisfied. We verify the analogous for negative paths. 
	 Using the previous method, for $y<0$, we have
	 \begin{align*}
	&\eta \Phi(\lambda) R_\lambda f(0) - \eta \frac{\Phi(\lambda)}{\lambda} f(0) - \nu \frac{\partial}{\partial x} R_\lambda f(x) \big\vert_{x=0^+} +(1-\nu) R_\lambda f(x) \big\vert_{x=0^-}\\
	&=\eta \Phi(\lambda) \left(\frac{1-\nu}{\eta\Phi(\lambda) + \sqrt{\lambda}} \int_{-\infty}^0 e^{-\sqrt{\lambda} y} f(y) dy +  (1-\nu) \eta \frac{\Phi(\lambda)}{\lambda} \frac{f(0)}{\eta\Phi(\lambda) + \sqrt{\lambda}}\right) - (1-\nu) \eta \frac{\Phi(\lambda)}{\lambda} f(0) +\\
	\ &-\nu \left(-\sqrt{\lambda} \frac{1-\nu}{\eta\Phi(\lambda) + \sqrt{\lambda}} \int_{-\infty}^0 e^{-\sqrt{\lambda} y} f(y) dy - \sqrt{\lambda} (1-\nu) \eta \frac{\Phi(\lambda)}{\lambda} \frac{f(0)}{\eta\Phi(\lambda) + \sqrt{\lambda}}\right)+\\
	\ & +(1 - \nu) \left(- \int_{-\infty}^{0} e^{-\sqrt{\lambda} y} f(y) dy + \sqrt{\lambda} \frac{1-\nu}{\eta\Phi(\lambda) + \sqrt{\lambda}} \int_{-\infty}^0 e^{-\sqrt y} f(y) dy + \sqrt{\lambda} (1-\nu) \eta \frac{\Phi(\lambda)}{\lambda} \frac{f(0)}{\eta\Phi(\lambda) + \sqrt{\lambda}}\right)\\
	&= \eta \Phi(\lambda) \left(\frac{1-\nu}{\eta\Phi(\lambda) + \sqrt{\lambda}} \int_{-\infty}^0 e^{-\sqrt{\lambda} y} f(y) dy + (1-\nu) \eta \frac{\Phi(\lambda)}{\lambda} \frac{f(0)}{\eta\Phi(\lambda) + \sqrt{\lambda}}\right) - (1-\nu) \eta \frac{\Phi(\lambda)}{\lambda} f(0) +\\
	\ &+ \left(- (1-\nu)\int_{-\infty}^{0} e^{-\sqrt{\lambda} y} f(y) dy + \sqrt{\lambda} \frac{1-\nu}{\eta\Phi(\lambda) + \sqrt{\lambda}} \int_{-\infty}^0 e^{-\sqrt y} f(y) dy + \sqrt{\lambda} (1-\nu) \eta \frac{\Phi(\lambda)}{\lambda} \frac{f(0)}{\eta\Phi(\lambda) + \sqrt{\lambda}}\right)\\
	&= \left(\frac{(1-\nu) \, \eta \Phi(\lambda)}{\eta\Phi(\lambda) + \sqrt{\lambda}} + \frac{(1-\nu)\sqrt{\lambda}}{\eta\Phi(\lambda) + \sqrt{\lambda}} -(1-\nu)  \right) \int_{-\infty}^0 e^{-\sqrt{\lambda} y} f(y) dy + (1-\nu) \eta \frac{\Phi(\lambda)}{\lambda} f(0) \left(-1 + \frac{\eta \Phi(\lambda)+ \sqrt{\lambda}}{\eta\Phi(\lambda) + \sqrt{\lambda}}\right)\\
	&=0,
	 \end{align*}
	 as required. By inverting the Laplace transform, we conclude that $u(t,x)=\mathbf{E}_x\left[f(B^{\nu, s} _t)\right]$ satisfies the conditions at $x=0$ \eqref{laplace:BC}, which we are considering.
\end{proof}
\begin{remark}
	In the case of plateaus, the process $B^{\nu,s}$ behaves like a time-changed Brownian motion with the inverse of a subordinator with symbol $\Phi$, denoted by $B \circ L^\Phi$ in our notation. As we know, see for example \cite{cinlar}, the points at the end of the plateau are regenerative for which we can use the Markov property. 
\end{remark}
\begin{remark}
	We define the process $B^{\nu,s}$ \textit{non-local} sticky Brownian motion because  it can be seen as a generalization of the two-sided sticky Brownian motion. Indeed, for $\Phi(\lambda)=\lambda^\alpha$, $\alpha \in (0,1)$, and a function $f \in \mathcal{M}_0 \cap C[0,\infty)$ with $f^\prime \in \mathcal{M}_0$, we have
	\begin{align*}
	\int_0^\infty e^{-\lambda t} \mathfrak{D}_t^\Phi f(t) dt= \lambda^\alpha \widetilde{f}(\lambda) - \frac{\lambda^\alpha}{\lambda} f(0) \rightarrow \lambda \widetilde{f}(\lambda) - f(0) =\int_{0}^{\infty} e^{-\lambda t} f^\prime(t) dt \quad \text{as } \alpha \uparrow 1.
	\end{align*}
	Then, for $\alpha \uparrow 1$, the Caputo-Dzherbashian derivative tends to the first derivative and the conditions at zero become
	\begin{align*}
	\frac{\partial}{\partial t} u(t,0) - \nu \frac{\partial}{\partial x} u(t,0^+) + (1-\nu )\frac{\partial}{\partial x} u(t,0^-)=0
	\end{align*}
	which is a dynamic condition for the heat equation, that is the Feller-Wentzell conditions related to the sticky Brownian motion. For more details on dynamic conditions see \cite{romanelli}.
\end{remark}
\section*{Appendix}
For the convenience of the reader and to ensure that the work is self-contained, we present the following proof, which can be found in \cite[Formula 6, Section 15]{itomckean-halfline}.
\begin{lemma}
	Under the hypothesis of Theorem \ref{thm:non-local-skew}, for $\lambda>0$, we have
	\begin{align*}
	\mathbf{E}_0 \left[\int_{0}^\infty e^{-\lambda t}f(B_t^\bullet) dt\right]= \frac{\int_0^\infty R_\lambda^D f(y) \, \Pi^\Phi (dy)}{\Phi(\sqrt{\lambda})},
	\end{align*} 
	where $B^\bullet$ is defined in \eqref{Bpallino}, $R_\lambda^D$ is the resolvent of a killed Brownian motion at zero (with Dirichlet boundary conditions) and $\Phi$, with $\Pi^\Phi$, is given by \eqref{LevKinFormula}.
\end{lemma}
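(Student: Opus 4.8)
The plan is to compute the potential $\mathbf{E}_0\left[\int_0^\infty e^{-\lambda t}f(B^\bullet_t)\,dt\right]$ directly from the representation \eqref{Bpallino}, conditioning on the subordinator. Given $H^\Phi$, the map $c(\ell):=H^\Phi_{L^\Phi_\ell}-\ell\ge 0$ is a fixed Borel function of the local-time level $\ell$, and \eqref{Bpallino} says that $B^\bullet_t=B^+_t+c(\gamma_t)$ for every $t$. Since $B^+$ is independent of $H^\Phi$, freezing a realization of $H^\Phi$ and using Tonelli together with the Laplace-transformed joint law \eqref{joint-laplace} of $(B^+_t,\gamma_t)$, the $B^+$-expectation of $\int_0^\infty e^{-\lambda t}f(B^\bullet_t)\,dt$ equals
\[
\int_0^\infty\!\!\int_0^\infty f(y+c(w))\,e^{-(y+w)\sqrt\lambda}\,dy\,dw=\int_0^\infty e^{-w\sqrt\lambda}\,g(c(w))\,dw,
\]
where $g(c):=\int_0^\infty e^{-z\sqrt\lambda}f(z+c)\,dz=e^{c\sqrt\lambda}\int_c^\infty e^{-y\sqrt\lambda}f(y)\,dy$. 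Hence it only remains to take the expectation over $H^\Phi$ of the last (still random) integral.

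For that step I would exploit the jump structure of $H^\Phi$. Since $\Phi$ carries no drift, $H^\Phi_t=\sum_{s\le t}\Delta H^\Phi_s$, so the disjoint intervals $(H^\Phi_{s-},H^\Phi_s)$ skipped by the jumps of $H^\Phi$ exhaust $[0,\infty)$ up to a Lebesgue-null set; for $w$ in such an interval $L^\Phi_w=s$, hence $c(w)=H^\Phi_s-w$. Splitting the $w$-integral over these intervals and substituting $w=H^\Phi_{s-}+v$ turns $\int_0^\infty e^{-w\sqrt\lambda}g(c(w))\,dw$ into $\sum_s e^{-H^\Phi_{s-}\sqrt\lambda}\,\Psi(\Delta H^\Phi_s)$, where $\Psi(\xi):=\int_0^\xi e^{-v\sqrt\lambda}g(\xi-v)\,dv$ and the sum runs over the jump times of $H^\Phi$. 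Then L\'evy's compensation formula for the Poisson point process of jumps $\{(s,\Delta H^\Phi_s)\}$, of intensity $ds\,\Pi^\Phi(d\xi)$, combined with $\mathbf{E}_0\left[e^{-H^\Phi_{s-}\sqrt\lambda}\right]=e^{-s\Phi(\sqrt\lambda)}$ from \eqref{LapH}, gives
\[
\mathbf{E}_0\left[\sum_s e^{-H^\Phi_{s-}\sqrt\lambda}\Psi(\Delta H^\Phi_s)\right]=\int_0^\infty e^{-s\Phi(\sqrt\lambda)}\,ds\int_0^\infty\Psi(\xi)\,\Pi^\Phi(d\xi)=\frac{1}{\Phi(\sqrt\lambda)}\int_0^\infty\Psi(\xi)\,\Pi^\Phi(d\xi),
\]
which is the source of the denominator $\Phi(\sqrt\lambda)$; all the integrals converge because $\Psi(\xi)=O(1\wedge\xi)$, an estimate of the same flavour as \eqref{stimaMarchaud}.

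It then remains to identify $\Psi$ with the Dirichlet resolvent. Writing $g(\xi-v)=e^{(\xi-v)\sqrt\lambda}\int_{\xi-v}^\infty e^{-y\sqrt\lambda}f(y)\,dy$, an application of Tonelli and a short change of variables in $\Psi(\xi)=\int_0^\xi e^{-v\sqrt\lambda}g(\xi-v)\,dv$ collapse it into $\int_0^\infty u_\lambda^D(\xi,y)f(y)\,dy$ with $u_\lambda^D(\xi,y)=\frac{1}{2\sqrt\lambda}\left(e^{-\sqrt\lambda|\xi-y|}-e^{-\sqrt\lambda(\xi+y)}\right)$; by \eqref{lapg1} this kernel is exactly the time-Laplace transform of the Dirichlet heat kernel $g(t,\xi-y)-g(t,\xi+y)$, so $\Psi(\xi)=R_\lambda^D f(\xi)$. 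Chaining these identities gives $\mathbf{E}_0\left[\int_0^\infty e^{-\lambda t}f(B^\bullet_t)\,dt\right]=\Phi(\sqrt\lambda)^{-1}\int_0^\infty R_\lambda^D f(y)\,\Pi^\Phi(dy)$, as claimed.

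The main obstacle is the second step. It requires, on the one hand, the fact that for a driftless subordinator the skipped intervals exhaust $[0,\infty)$ up to Lebesgue-null sets --- exactly the point where $\Phi$ having no drift enters --- and, on the other hand, a careful use of the compensation formula (predictability of $s\mapsto e^{-H^\Phi_{s-}\sqrt\lambda}$, finiteness of $\int\Psi\,d\Pi^\Phi$, and the interchange of the $H^\Phi$-expectation with the interval decomposition). An alternative is to run the whole computation through It\^o's excursion theory for the strong Markov process $B^\bullet$, whose inverse local time at $0$ is the driftless subordinator $\gamma^{-1}\circ H^\Phi$ with Laplace exponent $\Phi(\sqrt\lambda)$; then one must identify the corresponding excursion measure, which reduces to the same jump analysis. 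The remainder is bookkeeping, but the constants coming from the normalization $g(t,z)=e^{-z^2/4t}/\sqrt{4\pi t}$ (generator $\partial_x^2$, whence the $\frac{1}{2\sqrt\lambda}$ in $u_\lambda^D$) must be tracked so that the identity $\Psi=R_\lambda^D f$ holds on the nose.
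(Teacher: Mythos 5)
Your proposal is correct, and it reaches the identity \eqref{resolvent:Bpallino} by a genuinely different middle route than the paper. The paper argues pathwise with $B^\bullet$ itself: it decomposes time according to the intervals $[l_n^-,l_n^+)$ skipped by $H^\Phi$ (the complement being Lebesgue-null since $\Phi$ has no drift), applies the strong Markov property of $B^\bullet$ at the stopping times $\gamma^{-1}_{t-}(l_n^-)$, recognizes each post-jump piece as a Brownian motion killed at zero started at the jump height --- which is how $R_\lambda^D f$ enters, probabilistically and for free --- uses $\mathbf{E}_0[e^{-\lambda \gamma^{-1}(l)}]=e^{-\sqrt{\lambda}\, l}$ for the discount factor, and concludes with the compensation formula for the jump measure $N(dt\times dl)$ of intensity $dt\,\Pi^\Phi(dl)$. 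You instead freeze $H^\Phi$ by independence, integrate directly against the joint Laplace-transformed law \eqref{joint-laplace} of $(B^+_t,\gamma_t)$, and only afterwards decompose the resulting integral in the local-time level $w$ over the skipped intervals $(H^\Phi_{s-},H^\Phi_s)$, so that no Markov property of $B^\bullet$ is ever invoked; the price is the extra computation identifying $\Psi(\xi)$ with $R_\lambda^D f(\xi)$, which does come out right: your $\Psi$ collapses to the kernel $\frac{1}{2\sqrt{\lambda}}\left(e^{-\sqrt{\lambda}\vert \xi-y\vert}-e^{-\sqrt{\lambda}(\xi+y)}\right)$, which by \eqref{lapg1} is exactly the time-Laplace transform of the Dirichlet kernel $g(t,\xi-y)-g(t,\xi+y)$ for the generator $\partial_x^2$, so the constants match the paper's normalization. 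The two proofs share their essential ingredients --- the Lebesgue-nullity of the range of a driftless subordinator and the compensation formula together with $\mathbf{E}_0[e^{-\sqrt{\lambda} H^\Phi_{s-}}]=e^{-s\Phi(\sqrt{\lambda})}$, which is the common source of the factor $\Phi(\sqrt{\lambda})^{-1}$ --- and your remarks on predictability of $s\mapsto e^{-\sqrt{\lambda}H^\Phi_{s-}}$ and the bound $\Psi(\xi)=O(1\wedge\xi)$ are exactly what is needed to justify that step for bounded (possibly signed) $f$; what your version buys is independence from the strong Markov structure of $B^\bullet$, everything being pushed onto the explicit law \eqref{joint-laplace}, while the paper's version is shorter at the resolvent-identification stage.
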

\begin{proof}
	We know that the process $H^\Phi L^\Phi$ that is a right-continuous process with jumps (given the way the subordinator $H^\Phi$ is constructed ), since $H^\Phi$ is right continuous with jumps and $L^\Phi$ is continuous. If we enumerate the jumps of $H^\Phi$ with $l_1, l_2,...$, we can decompose $(0,\infty)$ in two sets:
	\begin{align*}
	&\mathcal{J}^c:=\{t \geq 0 : H^\Phi L^\Phi = t\}\\
	&\mathcal{J} := \bigcup_{n \geq 1} \mathcal{J}_n= \bigcup_{n \geq 1} [l_n^{-}, l_n^+),
	\end{align*}
	where we have to include the point $l_n^-$ because $H^\Phi L^\Phi$ is right-continuous. From \eqref{Bpallino}, we see that the process outside from the jumps of $H^\Phi$ is a reflecting Brownian motion otherwise we have to consider the jump. In particular, we have
	\begin{align*}
	B^\bullet=
	\begin{cases}
	B^+ \quad & \gamma_t \in \mathcal{J}^c\\
	l_n^+-\gamma_t + B^+ \quad & \gamma_t \in \mathcal{J}_n.
	\end{cases}
	\end{align*}
	indeed if $\gamma_t \in \mathcal{J}_n$ we have $H^\Phi L^\Phi \gamma= l_n^+$. We observe that $\gamma_t \in [l_n^-,l_n^+)$ if and only if $t \in [\gamma_{t-}^{-1}(l_n^-), \gamma_{t-}^{-1}(l_n^+))$, in fact $\gamma_t$ is a continuous process and its right inverse $\gamma_t^{-1}$ is a right continuous process with jumps, then to make sure that the point $l_n^-$ is included we have to introduce
	\begin{align*}
	\gamma_{t-}^{-1}:=\inf \{s \geq 0 : \gamma_s \geq t\},
	\end{align*}  
	which is left continuous.  Then, the resolvent is
	\begin{align*}
	\mathbf{E}_{0}  \left[\int_0^\infty e^{-\lambda t} f (B^\bullet_t) dt\right]&= \mathbf{E}_{0}  \left[\int_{\mathcal{J}\cup \mathcal{J}^c} e^{-\lambda t} f(B^\bullet_t) dt\right]
	\\
	&=\mathbf{E}_{0}  \left[\int_{\mathcal{J}} e^{-\lambda t} f(B^\bullet_t) dt\right] + \mathbf{E}_{0}  \left[\int_{\mathcal{J}^c} e^{-\lambda t} f(B^\bullet_t) dt\right]\\
	&=\sum_{n \geq 1} \mathbf{E}_{0}  \left[\int_{\gamma_{t-}^{-1}(l_n^-)}^{\gamma_{t-}^{-1}(l_n^+)} e^{-\lambda t} f(B^\bullet_t) dt\right] + \mathbf{E}_{0}  \left[\int_{\mathcal{J}^c} e^{-\lambda t} f(B^\bullet_t) dt\right],
	\end{align*} 
	but we are dealing only with pure jump subordinators (with zero drift), then $\mathcal{J}^c$ has zero measure. For the remaining part, we have that
	\begin{align*}
	\mathbf{E}_{0}  \left[\int_0^\infty e^{-\lambda t} f(B^\bullet_t) dt\right]&=\sum_{n \geq 1} \mathbf{E}_{0}  \left[\int_{\gamma_{t-}^{-1}(l_n^-)}^{\gamma_{t-}^{-1}(l_n^+)} e^{-\lambda t} f(B^\bullet_t) dt\right]\\
	&=\sum_{n \geq 1}  \mathbf{E}_{0} \left[\int_0^{\gamma_{t-}^{-1}(l_n^+)-\gamma_{t-}^{-1}(l_n^-)} e^{-\lambda (s + \gamma_{t-}^{-1}(l_n^-))} f(B^\bullet_{s + \gamma_{t-}^{-1}(l_n^-)}) ds\right].
	\end{align*}
	We use the strong Markov property for $B^\bullet$, whose natural sigma algebra is denoted by $\mathcal{F}$, with respect to the stopping time $\gamma_{t-}^{-1}(l_n^-)$ and we get
	\begin{align*}
	&\mathbf{E}_{0}  \left[\int_0^\infty e^{-\lambda t} f(B^\bullet_t) dt\right]\\
	&=\sum_{n \geq 1} \mathbf{E}_{0}\left[ \mathbf{E}_{0} \left[\int_0^{\gamma_{t-}^{-1}(l_n^+)-\gamma_{t-}^{-1}(l_n^-)} e^{-\lambda (s + \gamma_{t-}^{-1}(l_n^-))} f(B^\bullet_{s + \gamma_{t-}^{-1}(l_n^-)}) ds\vert \mathcal{F}_{\gamma_{t-}^{-1}(l_n^-)}\right]\right]\\
	&=\sum_{n \geq 1} \mathbf{E}_{0} \left[\mathbf{E}_{0} \left[e^{-\lambda \gamma_{t-}^{-1}(l_n^-)} \mathbf{E}_{0}\left[ \int_{0}^{\gamma^{-1}(l_n)} e^{-\lambda t} f(B^\bullet_t) dt\right]\right]\right],
	\end{align*}
	but the process, during the jump, is $B^\bullet_t=l_n^+-\gamma_t + B^+$ and since $t \leq \gamma^{-1}(l_n) $ it behaves as $B_t$ for $t \leq \tau_0$ started at $l_n$, with $\tau_0$ hitting time at zero for $B_t$. Then, by using also $\gamma_{t-}^{-1}(l_n^-)=\gamma_{t}^{-1}(l_n^-)$ a.s. and its characteristic exponent, we obtain
	\begin{align*}
	\mathbf{E}_{0}  \left[\int_0^\infty e^{-\lambda t} f(B^\bullet_t) dt\right]&=\sum_{n \geq 1} \mathbf{E}_{0}\left[ \mathbf{E}_{0} \left[e^{-\lambda \gamma_{t}^{-1}(l_n^-)} \mathbf{E}_{l_n}\left[ \int_{0}^{\tau_0} e^{-\lambda t} f(B_t) dt \right]\right]\right]\\
	&=\sum_{n \geq 1} \mathbf{E}_{0} \left[e^{-\sqrt{\lambda} l_n^-} {R}_\lambda^D f(l_n)\right]\\
	&=\sum_{n \geq 1} \mathbf{E}_{0} \left[e^{-\sqrt{\lambda} H^\Phi (l_n^-)} {R}_\lambda^D f(l_n)\right]\\
	&=\mathbf{E}_{0} \left[\int_{(0,\infty) \times (0,\infty)} e^{-\sqrt{\lambda} H^\Phi (l^-)} {R}_\lambda^D f(l) N(dt \times dl)\right],
	\end{align*}
	where $N(dt \times dl)$ is the random measure associated to $H^\Phi$. From \cite[Example II.4.1]{ikeda2014stochastic} we know that $N(dt \times dl)= dt \, \Pi^\phi(dl)$, then we get
	\begin{align*}
	&\mathbf{E}_{0} \left[\int_{(0,\infty) \times (0,\infty)} e^{-\sqrt{\lambda} H^\Phi (l^-)} {R}_\lambda^D f( l) N(dt \times dl)\right]\\
	&\lim_{\varepsilon \to 0} \mathbf{E}_{0} \left[\int_{(0,\infty) \times (0,\infty)} e^{-\sqrt{\lambda} H^\Phi (l-\varepsilon)} {R}_\lambda^D f( l) N(dt \times dl)\right]
	\\
	&=\int_0^\infty \int_0^\infty e^{-t \Phi(\sqrt{\lambda})} {R}_\lambda^D f( l) dt \,\Pi^\Phi(dl)\\
	&=\frac{\int_0^\infty {R}_\lambda^D f(y) \Pi^\Phi(dl)}{\Phi(\sqrt{\lambda})}
	\end{align*}
	as required.
\end{proof}

\bibliographystyle{plain}
\bibliography{nlskewstick.bib}

\end{document}